\newtheorem{theorem}{Theorem}
\newtheorem{definition}{Definition}
\newtheorem{Remark}{Remark}
\newtheorem{lemma}{Lemma}
\newtheorem{prop}{Proposition}
\def\A{\mathscr{A}}
\def\B{\mathscr{B}}
\def\U{\mathcal{U}}
\def\a{c}
\def\L{Lip(\varphi)}
\title[Weak KAM theorem]{Weak KAM theorem for Hamilton-Jacobi equations}
\author[X. Su]{Xifeng Su}
\address{School of Mathematics\\
Beijing Normal University\\
No. 19, XinJieKouWai St.,HaiDian District\\
 Beijing 100875, P. R. China}
\email{billy3492@gmail.com, xfsu@bnu.edu.cn}
\author[J. Yan]{Jun Yan}
\address{School of Mathematical Sciences and Key Lab. of Mathematics
for Nonlinear Science, Fudan University, Shanghai 200433, China}
\email{yanjun@fudan.edu.cn}
\begin{document}
\maketitle

\begin{abstract}
In this paper, we generalize weak KAM theorem from positive Lagrangian systems to ``proper'' Hamilton-Jacobi equations.

We introduce an implicitly defined solution semigroup of evolutionary Hamilton-Jacobi equations.
By exploring the properties of the solution semigroup, we prove the convergence of solution semigroup and
existence of weak KAM solutions for stationary equations:
\begin{equation*}
H(x, u, d_x u)=0.
\end{equation*}
\end{abstract}

\keywords{weak KAM thoery, Hamilton-Jacobi equations, solution
semigroup, \\viscosity solution.}

\subjclass[2000]
{
37J50   
49L25   
}


\section{Introduction}
Let $M$ be a compact and connected smooth manifold of dimension $m$
without boundary. We denote by $TM$ and $T^*M$ the tangent bundle
and the cotangent bundle respectively. We denote $\pi:TM\rightarrow
M$ the canonical projection. A point in $TM$ will be denoted by
$(x,v)$ with $x\in M$ and $v\in T_x M=\pi^{-1}(x)$. Likewise, a
point in $T^*M$ will be denoted by $x\in M$ and $p\in T_x^* M$, a
linear form on the vector space $T_x M$. We will fix a Riemannian
metric $g$ on $M$ once and for all. For $v\in T_x M$, the norm
$\|v\|_x$ is $g(v,v)^{\frac{1}{2}}$ and we will also denote by
$\|\cdot\|_x$ the dual norm on $T_x^*M$ if it does not cause
confusion.

We suppose $H:T^*M\times \mathbb{R}\rightarrow \mathbb{R}$ is a
$C^\infty$ function satisfying the following conditions:
\begin{itemize}
\item [\bf{(H1)}] For each $(x,p,u)\in T^*M\times \mathbb{R}$, the Hessian matrix $\frac{\partial^2 H}{\partial
p^2}(x,u,p)$ is everywhere positive definite.

\item [\bf{(H2)}] For each $u\in\mathbb{R}$, $\lim_{\|p\|_x\rightarrow\infty} \frac{H(x,u,p)}{\|p\|_x} =
+\infty$ uniformly in $x\in M$, where $\|\cdot\|_x$ denotes the norm
on $T^*_x M$ induced by a Riemannian Metric.

\item [\bf{(H3)}] The flow $\Phi^t$ of \eqref{characteristic equ} is
complete. That is, the maximal solution of \eqref{characteristic
equ} are defined on all of $\mathbb{R}$.

\item [\bf{(H4)}] $H(x,u,p)$ is uniformly Lipschitz continuous with respect to $u$. We will denote by $\lambda\geq0$ the
corresponding Lipschitz constant
\[
\lambda = \sup_{\substack{u_1,u_2\in \mathbb{R}\\ u_1\neq u_2} }
\frac{|H(x,u_1,p) - H(x,u_2,p)|}{|u_1 - u_2|}  \qquad \forall
~(x,p)\in T^*M.
\]

\item [\bf{(H5)}]$H(x, u, p )$ is increasing with respect to $u$.

\item [\bf{(H6)}]There exists a real number $\a$ such that
\[
c(L(x, \a, \dot{x})) = 0
\]where
\[
c(L(x,\a,\dot{x})) = \inf_{u\in C^{1,1}(M)}
\max_{x\in M} H(x, \a, d_x u(x))
\]is referred as the Ma\~n\'e critical value.
\end{itemize}

We would remark that the completeness assumption of the phase flow $\Phi^t$ is
to exclude the case that the Tonelli minimizers are not $C^1$.
See \cite{Ball'85} for a counterexample in the case of time periodic positive definite Lagrangian systems.

We also point out that (H5) is crucial for the convergence of the solution semigroup in Section \ref{convergence section},
which is referred as \emph{proper condition} (see \cite{Lions'92}).

One example satisfying (H1)-(H5) to keep in mind could be
\begin{equation}
H(x,u,d_x u) = u + H_1(x,d_x u)
\end{equation}where $H_1$ is the usual Tonelli Hamiltonian.

The corresponding evolutionary first-order Hamilton-Jacobi equation
is:
\begin{equation}\label{Cauchy problem}
\left\{\!\!\!
  \begin{array}{rl}
   &\frac{\partial u}{\partial t} + H(x, u, d_x u)=0  \qquad \text{on }M\times \mathbb{R}\\
   &u(x,0)=\varphi(x)\qquad \text{on } M;
  \end{array}
\right.
\end{equation}where $\varphi$ is a given continuous function on $M$.

This first-order nonlinear PDE is of the most general form in the
sense that the unknown function enters explicitly.
The fact that the Hamiltonian depends on the unknown function will be the main obstacle for us in this paper.
We will overcome this difficulty by introducing new approaches such as
defining implicitly the solution semigroup.

Our approach is based on both characteristic method and dynamical approach.

On the one hand, it is because PDEs have not been nearly so well studied as ODEs.
Characteristic method can reduce a problem in partial differential
equations to a problem in ordinary differential equations. See
\cite{Arnold'92,Lions'82,Benton'77} for more detailed description
for characteristics method.

On the other hand, dynamical systems have had a period of fast development within the last three decades.
The dynamical approach here is mainly to employ the theory of ODEs, dynamical systems and
variational methods to find ``integrable structure" (weak KAM
solutions) within general first-order nonlinear PDEs.

By characteristic method, it suffices to deal with the characteristic equation of \eqref{Cauchy problem}, an ODE system:
\begin{equation}\label{characteristic equ}
\left\{\!\!\!
  \begin{array}{rl}
  &\dot{x} = \frac{\partial H}{\partial p}(x, u, p)\\
  &\dot{p} = -\frac{\partial H}{\partial x}(x, u, p) - \frac{\partial H}{\partial u}(x, u, p)\ p\\
  &\dot{u} =  \frac{\partial H}{\partial p}(x, u, p) \ p - H(x, u, p).
\end{array}
\right.
\end{equation}
The phase curves
of the above system on the $2m+1$-dimensional space
$T^*M\times\mathbb{R}$ are called the \emph{characteristics} of \eqref{Cauchy problem}.
Moreover, this system defines a time independent vector field $E$ on $T^*M\times\mathbb{R}$
and generates a flow of diffeomorphisms denoted as $\Phi^t$ from
$T^*M\times\mathbb{R}$ to itself.

We will prove the existence and regularity theorem
for the calibrated curves of \eqref{characteristic equ} which minimize the action.
We will furthermore show that such calibrated curves are characteristics.
These results are analogues of the Tonelli's theorem and Weierstrass's theorem in Mather theory.

We begin with a quick recounting of the main results in the
literature of Mather theory and weak KAM theory which are global and
non-perturbative theories. See \cite{Evans'04,Kaloshin'05}.

The classical weak KAM theorem for Hamilton-Jacobi equation by A.
Fathi \cite{Fathi'97a,Fathi'97b} and W. E \cite{E'99} makes a bridge
between the celebrated Mather theory \cite{Mather'91,Mather'93} and
the classical theory of viscosity solution of Hamilton-Jacobi
equation \cite{Lions'81,Lions'83,Lions'84,Lions'82}.

In the present paper, we will introduce the remarkable tool, the
solution semigroup of characteristic equation \eqref{characteristic
equ}, which is an analogue of the Lax-Oleinik operator
\cite{Hopf'50,Lax'57,Oleinik'57} or \cite{Fathi'08} in classical
weak KAM theory. See also \cite{Yan'12} for a new kind of
Lax-Oleinik type operator with parameters associated with time
periodic positive definite Lagrangian Systems.

By the solution semigroup, we will establish the weak KAM theorem, 
the existence of variational solutions for \eqref{Cauchy problem},
which is just a dynamical description of the viscosity solutions in Section \ref{weak KAM type framework}.

Our next goal is to prove the the convergence of the solution semigroup
which asserts the existence of the weak KAM solutions
for the stationary Hamilton-Jacobi equation:
\begin{equation}\label{Dirichlet problem}
H(x, u, d_x u)=0\qquad \text{on } M.
\end{equation}

We will show that the limit points of the solution semigroup acting
on an arbitrary $u\in C(M\times\mathbb{R},\mathbb{R})$ as initial
condition converges to a weak KAM solution of
\eqref{Dirichlet problem}.

\section{Dynamics of the calibrated curves}\label{dynamics section}
In this section, we will first give a proof of a lower semi-continuity property of the action, which is an analogue of Tonelli's theorem.
And then, we will introduce several key concepts, such as solution semigroup, calibrated curves, which will be useful for our dynamical approach.
Indeed, we show the existence of the solution semigroup and its associated calibrated curves in our setting.

In addition, to apply variational approach with a dynamical interpretation,
we will investigate the relation between calibrated curves and characteristics and prove the the regularity of the calibrated curves.

\subsection{The existence of action minimizing curves} For every $t>0$ and $u\in
C^0(M\times\mathbb{R},\mathbb{R})$, we first define the action of an
absolutely continuous curve $\gamma: [0,t]\rightarrow M$ as
\begin{equation}
A_u^t(\gamma) = \int_0^t L\big(\gamma(s),u\big(\gamma(s),s\big),
\dot\gamma(s)\big) \ ds,
\end{equation}where $L(x,u, \dot{x}) = \sup_{p\in T^*_x M } \{\langle p, \dot{x} \rangle - H(x,u,
p)\}$, which will be referred as the Lagrangian.
\begin{Remark}
Note that the Lagragian depends on the parameter in $\mathbb{R}$.
More precisely, along the absolutely continuous curve, the
Lagrangian is pointwisely defined.
\end{Remark}

In this section, we will prove the existence of curves
$\gamma:[0,t]\rightarrow M$ which minimize the action $A_u^t$ over
the class of absolutely continuous curves subject to a fixed
boundary condition. 
We will use the arguments inspired by
\cite{Mather'91} and \cite{Fathi'08}, but the proof here involves
some more complexity.

In what follows,we will omit $t$ when it doesn't cause confusions.
Note that $A_u(\gamma)$ exists since $L$ is bound below although it
may be $+\infty$.

\begin{prop}\label{property of the Lagrangian}
\begin{enumerate}[(1)]
\item Assume that $H(x, u, p)$ has positive definite
fiberwise Hessian second derivative and superlinear growth. Then, for every $u\in\mathbb{R}$, $L(x,u,\dot{x})$ has positive definite
fiberwise Hessian second derivative and superlinear growth.

\item Assume that $H(x,u,p)$ is increasing and uniformly Lipschitz continuous with respect
to $u$ and $\lambda\geq 0$ is the coresponding Lipschitz constant. Then, $L(x,u,\dot{x})$ is decreasing and uniformly Lipschitz continuous with respect
to $u$ and the Lipschitz constant is $\leq \lambda$.
\end{enumerate}
\end{prop}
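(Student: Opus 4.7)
The plan is to treat the two items independently, using the classical Fenchel--Legendre duality machinery for part (1) and elementary sup-inequalities for part (2). For each fixed $u\in\mathbb{R}$, the map $p\mapsto H(x,u,p)$ is smooth, fiberwise strictly convex (by (H1)) and superlinear (by (H2)), so $H(\cdot,u,\cdot)$ is a Tonelli Hamiltonian in the classical sense. Thus part (1) is really the statement that the Legendre transform of a Tonelli Hamiltonian is a Tonelli Lagrangian, applied pointwise in the parameter $u$.

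For part (1), I would first show that for each $(x,u,\dot{x})$ the supremum defining $L(x,u,\dot{x})$ is finite and attained at a unique $p^\ast=p^\ast(x,u,\dot{x})$. Finiteness follows from (H2): for any $K>0$ there is $C(K)$ with $H(x,u,p)\geq K\|p\|_x - C(K)$, so the sup is bounded by $\|\dot{x}\|_x\|p\|_x - K\|p\|_x + C(K)$, which is bounded above in $p$ once $K>\|\dot{x}\|_x$; the same inequality yields $L(x,u,\dot{x})\geq K\|\dot{x}\|_x - \tilde C(K)$ for every $K$, giving superlinearity. Uniqueness of $p^\ast$ comes from strict convexity in $p$, and the first-order condition $\dot{x}=\partial_p H(x,u,p^\ast)$ together with the implicit function theorem (using (H1)) provides smoothness of $p^\ast$ in $(x,u,\dot{x})$. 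A direct differentiation then yields
\[
\frac{\partial^2 L}{\partial \dot{x}^2}(x,u,\dot{x}) \;=\; \Bigl(\frac{\partial^2 H}{\partial p^2}(x,u,p^\ast)\Bigr)^{-1},
\]
which is positive definite.

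For part (2), monotonicity of $H$ in $u$ from (H5) immediately reverses under the sup: if $u_1\leq u_2$ then $\langle p,\dot{x}\rangle - H(x,u_1,p)\geq \langle p,\dot{x}\rangle - H(x,u_2,p)$ for every $p$, so $L(x,u_1,\dot{x})\geq L(x,u_2,\dot{x})$. For the Lipschitz estimate, I would use the general fact that $|\sup_p f_1(p)-\sup_p f_2(p)|\leq \sup_p|f_1(p)-f_2(p)|$ applied to $f_i(p)=\langle p,\dot{x}\rangle - H(x,u_i,p)$, giving
\[
|L(x,u_1,\dot{x}) - L(x,u_2,\dot{x})| \;\leq\; \sup_{p\in T^\ast_x M} |H(x,u_1,p) - H(x,u_2,p)| \;\leq\; \lambda|u_1-u_2|
\]
by (H4). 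A small technicality is that the sup in the definition of $L$ ranges over an unbounded set, so to invoke the inequality cleanly one should either restrict to a compact $p$-region where the sup is actually attained (justified by the argument in part (1)) or note that the inequality is valid for suprema over arbitrary sets.

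The only real obstacle is the part (1) regularity/attainment step, but this is a standard consequence of (H1)--(H2) and poses no genuine difficulty; once $p^\ast$ is identified as a smooth function, both the Hessian identity and the superlinear lower bound are automatic. Part (2) is essentially formal once one writes the sup-inequality.
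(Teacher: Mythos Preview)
Your proposal is correct and matches the paper's approach: the paper in fact dismisses (1) in one sentence as a consequence of standard Legendre duality, and for (2) it uses the attained maximizer $p_0$ for one of the $u_i$ to produce the same inequalities you obtain from the general $|\sup f_1-\sup f_2|\leq \sup|f_1-f_2|$ bound. One small slip worth tightening: the superlinearity of $L$ does \emph{not} follow from the inequality $H(x,u,p)\geq K\|p\|_x - C(K)$ as you state---that direction only gives finiteness of the sup. To get $L(x,u,\dot{x})\geq K\|\dot{x}\|_x - \tilde C(K)$ you instead use that $H$ is \emph{bounded} on compact $p$-sets: choose $p$ with $\|p\|_x=K$ and $\langle p,\dot{x}\rangle=K\|\dot{x}\|_x$, then $L(x,u,\dot{x})\geq K\|\dot{x}\|_x - \max_{\|q\|_x=K}H(x,u,q)$.
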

\begin{proof}
(1) results from the definition of positive definiteness, superlinear growth and the relation between $H$ and $L$.

To show (2), we notice that
\[
L(x,u, \dot{x}) = \sup_{p\in T^*_x M } \{\langle p, \dot{x} \rangle - H(x,u,p)\}.
\]

Suppose that $u_1,u_2 \in \mathbb{R}$ and $u_1\geq u_2$. For any $(x,\dot{x})\in T_x M$ there exists a unique $p_0$ such that
$\dot{x} = \frac{\partial H}{\partial p}(x, u, p_0)$ and $L(x,u, \dot{x}) = \langle p_0, \dot{x} \rangle - H(x,u,p_0)$.

Therefore,
\begin{equation*}
\begin{split}
L(x,u_1, \dot{x}) &= \sup_{p\in T^*_x M } \{\langle p, \dot{x} \rangle - H(x,u,p)\}\\
                  &= \langle p_0, \dot{x} \rangle - H(x,u,p_0)\\
                  &\leq \langle p_0, \dot{x} \rangle - H(x,u_2,p_0)\\
                  &\leq L(x, u_2, \dot{x}),
\end{split}
\end{equation*}which shows the monotonicity of $L(x,u, \dot{x})$ with respect to $u$.

To prove the Lipschitz property, we notice that
\begin{equation*}
\begin{split}
L(x, u_1, \dot{x}) &= \langle p_0, \dot{x} \rangle - H(x,u_1,p_0)\\
                 &\leq  \langle p_0, \dot{x} \rangle - H(x,u_2,p_0) + \lambda |u_1 - u_2|\\
                 &\leq L(x,u_2, \dot{x}) + \lambda\  |u_1 - u_2|.
\end{split}
\end{equation*}
Likewise, we have $L(x,u_2, \dot{x}) \leq L(x, u_1, \dot{x}) + \lambda\  |u_1 - u_2|$.

Hence, we obtain $|L(x,u_2, \dot{x}) - L(x, u_1, \dot{x})| \leq  \lambda\  |u_1 - u_2|$.
\end{proof}

Let us now introduce the following fundamental theorem which asserts
the compactness of certain subsets of $C^{ac} ([0,t],M)$ and will
play an important role in the next sections.
\begin{theorem}[Tonelli's Theorem]\label{compact lemma}
Assume that (H1)-(H4) hold.
Let $K\in\mathbb{R}$ and $u\in C^0(M\times\mathbb{R},\mathbb{R})$.
The set
\begin{equation*}
C^{ac}_K \equiv \left\{ \gamma\in C^{ac} ([0,t],M) : ~A_u (\gamma)
\leq K\right\}
\end{equation*}is compact in the $C^0$-topology.
\end{theorem}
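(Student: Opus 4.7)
The plan is to establish the two ingredients of compactness separately: pre-compactness of $C^{ac}_K$ in the $C^0$-topology (every sequence admits a uniformly convergent subsequence), and closedness (the uniform limit again satisfies $A_u\leq K$). The crucial preliminary observation is that, since $u\in C^0(M\times\mathbb{R},\mathbb{R})$ and both $M$ and $[0,t]$ are compact, the values $u(\gamma(s),s)$ lie in a fixed bounded interval $[-B,B]$ for every $\gamma:[0,t]\to M$.

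For pre-compactness, I would invoke the superlinearity of $L$ in $\dot x$ guaranteed by Proposition~\ref{property of the Lagrangian}(1). Together with the uniform bound $|u(\gamma(s),s)|\leq B$, this yields: for every $C>0$ there is $K(C,B)$ with $L(x,u,\dot x)\geq C\|\dot x\|_x - K(C,B)$ uniformly in $x\in M$ and $|u|\leq B$. Combined with $A_u(\gamma)\leq K$, this gives a de la Vall\'ee Poussin-type estimate forcing the family $\{\|\dot\gamma\|_{\gamma(\cdot)}:\gamma\in C^{ac}_K\}$ to be uniformly integrable on $[0,t]$. Uniform integrability of the speeds yields equi-absolute continuity, hence equicontinuity of the curves; equiboundedness is automatic from compactness of $M$. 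Arzel\`a--Ascoli then produces a uniformly convergent subsequence.

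For closedness, suppose $\gamma_n\in C^{ac}_K$ with $\gamma_n\to\gamma$ uniformly on $[0,t]$. The uniform integrability of the derivatives, together with a weak-compactness argument in $L^1$, transfers along the subsequence to show that $\gamma$ is absolutely continuous and $\dot\gamma_n\rightharpoonup\dot\gamma$ weakly in $L^1$. It remains to prove $A_u(\gamma)\leq \liminf A_u(\gamma_n)$. I would reduce this to classical ($u$-independent) Tonelli lower semicontinuity by freezing $u$ along the limit curve: set $\tilde L(x,s,\dot x) := L(x,u(\gamma(s),s),\dot x)$ and $\tilde A(\eta) := \int_0^t \tilde L(\eta(s),s,\dot\eta(s))\,ds$. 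Continuity of $u$ on the compact set $M\times[0,t]$ and the uniform convergence $\gamma_n\to\gamma$ imply $u(\gamma_n(\cdot),\cdot)\to u(\gamma(\cdot),\cdot)$ uniformly on $[0,t]$, and the uniform Lipschitz estimate from Proposition~\ref{property of the Lagrangian}(2) then gives
\begin{equation*}
|A_u(\gamma_n)-\tilde A(\gamma_n)|\leq \lambda\int_0^t |u(\gamma_n(s),s)-u(\gamma(s),s)|\,ds\longrightarrow 0.
\end{equation*}
The auxiliary $\tilde L$ is continuous in $x$, measurable in $s$, convex and superlinear in $\dot x$, so the classical lower-semicontinuity theorem yields $\tilde A(\gamma)\leq \liminf \tilde A(\gamma_n)$. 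Since $\tilde A(\gamma)=A_u(\gamma)$, the conclusion $A_u(\gamma)\leq K$ follows.

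The main obstacle is precisely the non-classical $u$-slot in the Lagrangian, which blocks a direct appeal to the standard Tonelli theorem. The remedy is the uniform Lipschitz bound of Proposition~\ref{property of the Lagrangian}(2): it lets us replace $u(\gamma_n(s),s)$ by $u(\gamma(s),s)$ at vanishing cost, reducing the lower-semicontinuity question to the well-developed case of a time-dependent Tonelli Lagrangian without a $u$-slot. The convexity and superlinearity in $\dot x$ inherited from $H$ via the Legendre transform then supply the compactness and convexity needed both for the Arzel\`a--Ascoli step and for the classical lower-semicontinuity step.
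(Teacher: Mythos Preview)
Your argument is correct, and the equicontinuity step matches the paper's first step almost verbatim (superlinearity plus the uniform bound $|u|\leq K_t$ on $M\times[0,t]$ give absolute equicontinuity, then Arzel\`a--Ascoli). Where you diverge is in the lower-semicontinuity step. The paper does \emph{not} freeze $u$ along the limit curve and invoke a classical time-dependent Tonelli theorem; instead it proves lower semicontinuity from scratch: after localizing to charts, at each differentiability point $s$ of $\gamma$ it uses fiberwise convexity to write down the affine minorant
\[
L(x,u(x,\tau),\dot x)\ \geq\ L(\gamma(s),u(\gamma(s),s),\dot\gamma(s))+\frac{\partial L}{\partial \dot x}(\gamma(s),u(\gamma(s),s),\dot\gamma(s))\cdot(\dot x-\dot\gamma(s))-\epsilon
\]
for $(x,\tau)$ near $(\gamma(s),s)$, then patches these local estimates via a Vitali-type covering and a truncation $w_C=\min\{L,C\}$ to pass to the global inequality. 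Your reduction via Proposition~\ref{property of the Lagrangian}(2)---replacing $u(\gamma_n(s),s)$ by $u(\gamma(s),s)$ at cost $\lambda\!\int|u(\gamma_n,s)-u(\gamma,s)|\,ds\to0$---is a genuinely different and shorter route: it offloads all the hard analysis onto the classical ($u$-free) lower-semicontinuity theorem, at the price of taking that theorem as a black box. The paper's approach is fully self-contained and makes explicit how the $u$-dependence is absorbed pointwise into the supporting-hyperplane inequality rather than globally via a Lipschitz comparison. Note, incidentally, that only the Lipschitz half of Proposition~\ref{property of the Lagrangian}(2) is needed, and that half requires only (H4), consistent with the theorem's hypotheses (H1)--(H4).
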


To keep the pace of the exposition, we postpone
its proof in the appendix of the paper.

\subsection{Solution semigroup}
We will first deduce the solution semigroup theory for the
evolutionary Hamilton-Jacobi equations \eqref{Cauchy problem}. The
key point is that the solution semigroup is defined implicitly.
To the best of our knowledge, similar semigroup were considered in \cite{Douglis'65} for some cases.
See \cite{Benton'77} and references therein for an elementary
introduction to the solution semigroup with more restrictions.

\subsubsection{Well-definition of solution semigroup}
For every given continuous function $\varphi$ on $M$, we now define
the operator $\A:C^0(M\times \mathbb{R}, \mathbb{R}
)\circlearrowleft$ depending on $\varphi$ as follows:
\begin{equation}\label{operator A}
\begin{split}
\A[u](x,t) &= \inf_{\substack{\gamma(t)=x\\ \gamma\in
C^{ac}([0,t],M)}} \left\{ \varphi\big(\gamma(0)\big) + \int_0^t
L\big(\gamma(s),u\big(\gamma(s),s\big), \dot\gamma(s)\big) \ ds
\right\}\\
&= \inf_{\substack{\gamma(t)=x\\ \gamma\in C^{ac}([0,t],M)}} \left\{
\varphi\big(\gamma(0)\big) + A_u(\gamma)\right\},
\end{split}
\end{equation}where $u\in C^0(M\times\mathbb{R}, \mathbb{R})$ and $(x,t)\in
M\times\mathbb{R}^+$.

In the following, we will prove that the operator has a unique fixed point.


\begin{prop}\label{existence of calibrated curves}
For every $x\in M, ~t>0$ and $u\in C^0(M\times
\mathbb{R},\mathbb{R})$, there exists an absolutely continuous curve
$\gamma: [0,t]\rightarrow M$ such that
\begin{equation}
\A[u](x,t) = \varphi\big(\gamma(0)\big) + \int_0^t
L\big(\gamma(s),u\big(\gamma(s),s\big), \dot\gamma(s)\big) \ ds.
\end{equation}
\end{prop}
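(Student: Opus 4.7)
\medskip

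The plan is to apply the direct method of the calculus of variations, using the Tonelli-type compactness already stated as Theorem~\ref{compact lemma} together with a lower semicontinuity argument for the $u$-dependent action.

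\textbf{Step 1: A minimizing sequence with bounded action.} Let $I = \mathcal{A}[u](x,t)$. Since $\varphi$ is continuous on the compact manifold $M$, it is bounded; also, since $u$ is continuous on the compact slab $M\times[0,t]$, we may choose any smooth curve joining an arbitrary point of $M$ to $x$ in time $t$ to see that $I < +\infty$. Pick a minimizing sequence $\gamma_n \in C^{ac}([0,t],M)$ with $\gamma_n(t) = x$ and $\varphi(\gamma_n(0)) + A_u(\gamma_n) \to I$. Because $\varphi$ is bounded, there is $K \in \mathbb{R}$ with $A_u(\gamma_n) \leq K$ for all large $n$, so $\gamma_n \in C^{ac}_K$ eventually.

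\textbf{Step 2: Extraction of a limit curve.} By Theorem~\ref{compact lemma}, the set $C^{ac}_K$ is compact in the $C^0$-topology, so after passing to a subsequence $\gamma_n \to \gamma$ uniformly on $[0,t]$ for some $\gamma \in C^{ac}_K$. Uniform convergence preserves the endpoint condition, giving $\gamma(t) = x$, and preserves continuity, so $\varphi(\gamma_n(0)) \to \varphi(\gamma(0))$.

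\textbf{Step 3: Lower semicontinuity of the $u$-dependent action.} It remains to show $A_u(\gamma) \leq \liminf_n A_u(\gamma_n)$. This is the main obstacle: the integrand $L(\gamma_n(s), u(\gamma_n(s),s), \dot\gamma_n(s))$ depends on $n$ through two different mechanisms, namely the explicit $u$-dependence and the velocity. To handle this, I would proceed in two moves. First, by Proposition~\ref{property of the Lagrangian}(2), $L$ is $\lambda$-Lipschitz in its second slot, and since $u$ is uniformly continuous on the compact set $M\times[0,t]$ and $\gamma_n \to \gamma$ uniformly, the quantity $\varepsilon_n := \sup_{s\in[0,t]} |u(\gamma_n(s),s) - u(\gamma(s),s)|$ tends to $0$. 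Hence
\begin{equation*}
A_u(\gamma_n) \geq \int_0^t L\bigl(\gamma_n(s), u(\gamma(s),s), \dot\gamma_n(s)\bigr)\,ds - \lambda\, t\, \varepsilon_n .
\end{equation*}
Second, the remaining integrand now depends on $n$ only through $(\gamma_n,\dot\gamma_n)$, while the $u$-slot has been frozen at the limit. The frozen Lagrangian $\widetilde L(x,v) := L(x, u(\gamma(s),s), v)$, viewed pointwise in $s$, is continuous in $(s,x)$, convex and superlinear in $v$ by Proposition~\ref{property of the Lagrangian}(1); so the classical Tonelli lower semicontinuity theorem for integral functionals with a convex, superlinear integrand (the same ingredient underlying Theorem~\ref{compact lemma}) yields
\begin{equation*}
\int_0^t L\bigl(\gamma(s), u(\gamma(s),s), \dot\gamma(s)\bigr)\,ds \leq \liminf_{n\to\infty} \int_0^t L\bigl(\gamma_n(s), u(\gamma(s),s), \dot\gamma_n(s)\bigr)\,ds.
\end{equation*}
Combining the two displays gives $A_u(\gamma) \leq \liminf_n A_u(\gamma_n)$.

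\textbf{Step 4: Conclusion.} Adding $\varphi(\gamma(0)) = \lim_n \varphi(\gamma_n(0))$ we obtain $\varphi(\gamma(0)) + A_u(\gamma) \leq I$, and the reverse inequality holds by definition of $I$ since $\gamma$ is admissible. Hence $\gamma$ realizes the infimum, which proves the proposition. The principal technical point, and the one I would expect to require the most care, is the lower semicontinuity in Step~3; the Lipschitz-in-$u$ estimate is exactly what decouples the $u$-dependence from the classical convex-in-velocity argument.
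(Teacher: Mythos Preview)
Your proof is correct and follows the same direct-method strategy as the paper: take a minimizing sequence, apply Theorem~\ref{compact lemma} to extract a $C^0$-convergent subsequence, and conclude via lower semicontinuity of $A_u$. The only difference is that the paper simply invokes Theorem~\ref{compact lemma} for the inequality $A_u(\gamma)\leq\liminf_n A_u(\gamma_n)$ (its appendix proof of that theorem establishes lower semicontinuity directly for the $u$-dependent action), whereas you supply a separate argument by freezing the $u$-slot via the Lipschitz bound from Proposition~\ref{property of the Lagrangian}(2) and then appealing to the classical Tonelli lower semicontinuity for a convex-in-velocity integrand.
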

\begin{proof}
For any $n\in\mathbb{N}$, by the definition of $\A$, there exists a
$\gamma_n\in C^{ac}([0,t],M)$ such that
\begin{equation}
\varphi(\gamma_n(0)) + \int_0^t L(\gamma_n(s), u(\gamma_n(s),s),
\dot{\gamma_n}(s))\  ds \leq \A[u](x,t) + \frac{1}{n}.
\end{equation}

By Theorem \ref{compact lemma}, up to a subsequence, still denoted
by $\gamma_n$, there exists a $\gamma\in C^{ac}([0,t],M)$ such that
$\gamma_n$ converges to $\gamma$ in the $C^0$-topology. Moreover,
\begin{equation}
A_u(\gamma)\leq \liminf_{n\rightarrow+\infty}A_u(\gamma_n).
\end{equation}
Hence $\A[u](x,t) = \varphi(\gamma(0)) + A_u(\gamma)$ which ends the
proof.
\end{proof}

We now study the property of the operator $\A$ defined in
\eqref{operator A}. We claim
\begin{lemma}\label{fixed point existence}
$\A$ has a unique fixed point.
\end{lemma}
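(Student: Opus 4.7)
The plan is to apply the Banach fixed point theorem on each finite time window $M\times[0,T]$ equipped with a well-chosen weighted sup-norm, and then paste the resulting fixed points into a single one on $M\times[0,+\infty)$.

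First I would verify that $\A$ maps $C^0(M\times[0,T],\mathbb{R})$ into itself for every $T>0$. For each $(x,t)$, Proposition \ref{existence of calibrated curves} furnishes an actual minimizer realising $\A[u](x,t)$, and continuity of $(x,t)\mapsto \A[u](x,t)$ follows by a standard perturbation-of-endpoint argument: take $(x_n,t_n)\to(x,t)$, select minimizers $\gamma_n$ for $\A[u](x_n,t_n)$, use Tonelli's Theorem (Theorem \ref{compact lemma}) to extract a $C^0$-limit $\gamma$, and compare with test curves obtained from any minimizer at $(x,t)$ by reparametrisation to the new endpoint. Note that $\A[u](x,t)$ depends only on the values of $u$ on $M\times[0,t]$, which is the key fact that will make the piecing-together step work.

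Next, on the Banach space $(C^0(M\times[0,T],\mathbb{R}),\|\cdot\|_{T,K})$ endowed with the weighted norm
\[
\|u\|_{T,K}=\sup_{(x,t)\in M\times[0,T]} e^{-Kt}|u(x,t)|,
\]
where $K>\lambda$ is fixed, I claim $\A$ is a $(\lambda/K)$-contraction. Indeed, for $u_1,u_2\in C^0$ and a fixed $(x,t)$, pick by Proposition \ref{existence of calibrated curves} a curve $\gamma$ realising $\A[u_1](x,t)$ and use it as a competitor for $\A[u_2](x,t)$. The Lipschitz estimate
\[
|L(y,u_1,v)-L(y,u_2,v)|\le \lambda |u_1-u_2|
\]
from Proposition \ref{property of the Lagrangian}(2) gives
\[
\A[u_2](x,t)-\A[u_1](x,t)\le \lambda\int_0^t \|u_2(\cdot,s)-u_1(\cdot,s)\|_\infty\, ds.
\]
Multiplying by $e^{-Kt}$, bounding $e^{-Kt}\|u_2(\cdot,s)-u_1(\cdot,s)\|_\infty \le e^{K(s-t)}\|u_2-u_1\|_{T,K}$, and integrating yields $\int_0^t e^{K(s-t)}ds \le 1/K$, hence $e^{-Kt}|\A[u_2](x,t)-\A[u_1](x,t)|\le (\lambda/K)\|u_2-u_1\|_{T,K}$. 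Symmetrising and taking the supremum over $(x,t)$ gives the claimed contraction. The Banach fixed point theorem then produces a unique fixed point $u^{(T)}\in C^0(M\times[0,T],\mathbb{R})$.

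Finally, since $\A[u](x,t)$ for $t\le T$ involves $u$ only on $M\times[0,T]$, for any $T'>T$ the restriction $u^{(T')}|_{M\times[0,T]}$ is again a fixed point of $\A$ on $M\times[0,T]$, and by uniqueness it agrees with $u^{(T)}$. These consistent pieces assemble into a unique fixed point $u^\ast\in C^0(M\times[0,+\infty),\mathbb{R})$ (with the initial value $\varphi$ recovered at $t=0$ by shrinking time windows and passing to the limit in the definition of $\A$). The main obstacle is the continuity step for $\A[u]$: once one knows that $\A$ preserves $C^0$, the rest is a routine weighted-norm contraction argument, but the continuity uses Tonelli compactness in an essential way because the variational problem depends on $u$ along the minimizing curve itself.
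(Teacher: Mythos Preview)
Your proof is correct but proceeds differently from the paper. The paper does not renorm: instead it shows directly that
\[
\bigl|\A^n[u](x,t)-\A^n[v](x,t)\bigr|\le \frac{(t\lambda)^n}{n!}\,\|u-v\|_\infty
\]
by iterating the basic estimate $|\A[u]-\A[v]|\le \lambda t\|u-v\|_\infty$, so that some power $\A^N$ is a contraction in the ordinary sup-norm; then it observes that the unique fixed point of $\A^N$ is automatically a fixed point of $\A$ (since $\A$ commutes with $\A^N$). Your Bielecki weighted norm $\|u\|_{T,K}=\sup e^{-Kt}|u(x,t)|$ with $K>\lambda$ is an equivalent classical device for the same linear-in-$t$ Lipschitz bound, and is arguably cleaner: it makes $\A$ itself a contraction and so avoids the additional ``fixed point of $\A^N$ is fixed by $\A$'' step. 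Your explicit causality/gluing argument (using that $\A[u](\cdot,t)$ depends only on $u|_{M\times[0,t]}$) also makes precise a point the paper leaves implicit, and your discussion of why $\A$ preserves $C^0$ via Tonelli compactness fills a verification the paper takes for granted. Either route is fine; the underlying estimate is the same.
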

\begin{proof}
For any given $t\in\mathbb{R}^+$ and every $u,v\in C^0(M\times \mathbb{R}, \mathbb{R})$, we estimate
\begin{equation*}
\begin{split}
&\big(\A[u] - \A[v]\big)(x,t)\\
\leq& \int_0^t (L\big(\gamma(s),u\big(\gamma(s),s\big),
\dot\gamma(s)\big) - L\big(\gamma(s),v\big(\gamma(s),s\big),
\dot\gamma(s)\big) )\
ds \\
\leq& \lambda \ \|u-v\|_{\infty} t
\end{split}
\end{equation*}where $\gamma\in C^{ac}([0,t],M)$ such that
\[
\A[v](x,t) = \varphi\big(\gamma(0)\big) + \int_0^t
L\big(\gamma(s),v\big(\gamma(s),s\big), \dot\gamma(s)\big) \ ds.
\]

Note that we use here the fact guaranteed by Proposition
\ref{existence of calibrated curves} that the infimum in the
definition of $\A$ is a minimum.

By exchange the position of $u$ and $v$, we obtain
\begin{equation*}
|\big(\A[u] - \A[v]\big)(x,t)| \leq \lambda \ \|u-v\|_{\infty}t.
\end{equation*}

Therefore, we have the following estimates:
\begin{equation*}
\begin{split}
 &\left|\big(\A^2[u] - \A^2[v]\big)(x,t)\right| \\
 \leq & \left| \int_0^t \lambda \big[ \A[u]\big(\gamma(s),s\big) -
 \A[v]\big(\gamma(s),s\big)\big] \ ds\right|\\
 \leq & \int_0^t  s \lambda^2 \|u-v\|_{\infty} \ ds \leq \frac{(t\lambda)^2}{2}
 \|u-v\|_{\infty}.
\end{split}
\end{equation*}

More general, continuing the above procedure, we obtain
\begin{equation}
\left|\big(\A^n[u] - \A^n[v]\big)(x,t)\right| \leq
\frac{(t\lambda)^n}{n!} \|u-v\|_{\infty}.
\end{equation}
Therefore, for any $t\in\mathbb{R}^+$, there exists some $N$ large
enough such that $\A^{N}:~ C^0(M,\mathbb{R})\circlearrowleft$ is a
contraction mapping and has a fixed point $u$. That is, for any
$t\in\mathbb{R}^+$ and $N\in\mathbb{N}$ large enough, there exists a
$u\in C^0(M, \mathbb{R})$ such that
\begin{equation}
\A^N[u](x) = u(x).
\end{equation}

We now show that $u$ is a fixed point of $\A$. Since
\begin{equation*}
\A[u] = \A\circ \A^N[u] = \A^N\circ \A[u],
\end{equation*}$\A[u]$ is also a fixed point of $\A^N$.
By the uniqueness of fixed point of contraction mapping, we have
\[A[u] = u.\]
\end{proof}

We denote $ T_t \varphi(x) = u(x,t)$ the unique fixed point of $\A$, i.e.,
\begin{equation}\label{solution semigroup}
T_t \varphi(x) = \inf_{\substack{\gamma(t)=x\\
\gamma\in C^{ac}([0,t],M)}} \left\{ \varphi\big(\gamma(0)\big) +
\int_0^t L\big(\gamma(s),T_s \varphi\big(\gamma(s)\big),
\dot\gamma(s)\big) \ ds \right\}.
\end{equation}
In the context, $\{T_t\}_{t\geq0}$ will be referred as the solution semigroup.
In the following section, we will show that the family of operators $\{T_t\}_{t\geq 0}$ is a semigroup of nonlinear operators.

\subsubsection{The semigroup property}
\begin{lemma}[Semigroup Property]\label{semi-group property}
$\{T_t\}_{t\geq 0}$ is a one-parameter semigroup of operators from
$C^0(M,\mathbb{R})$ into itself.
\end{lemma}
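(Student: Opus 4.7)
\textbf{Proof plan for Lemma \ref{semi-group property}.}
The strategy is to use the uniqueness of the fixed point of $\A$ (Lemma \ref{fixed point existence}) to reduce the semigroup property to a single verification of the fixed-point equation. Three items need to be established: $T_0 = \mathrm{id}$, mapping $C^0(M,\mathbb{R})$ to itself, and $T_{t+s}=T_t\circ T_s$. The first is immediate: for $t=0$ the only admissible curve in \eqref{solution semigroup} is the constant curve $\gamma\equiv x$, forcing $T_0\varphi(x)=\varphi(x)$. The second follows from the construction of the fixed point in Lemma \ref{fixed point existence}, whose fixed point $u$ lies in $C^0(M\times\mathbb{R},\mathbb{R})$, so that $x\mapsto T_t\varphi(x)=u(x,t)$ is continuous for each $t\ge 0$.

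The heart of the matter is the semigroup identity. Fix $s\ge 0$ and consider $\varphi$ and the auxiliary initial datum $\psi:=T_s\varphi\in C^0(M,\mathbb{R})$. Let $\A_\varphi$ and $\A_\psi$ denote the operators in \eqref{operator A} associated with $\varphi$ and $\psi$ respectively, and define $w(x,t):=T_{t+s}\varphi(x)$. My plan is to verify that $w$ is a fixed point of $\A_\psi$; by Lemma \ref{fixed point existence} this immediately gives $w(x,t)=T_t\psi(x)=T_t(T_s\varphi)(x)$, which is the desired identity.

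To prove $\A_\psi[w]=w$ I will split and concatenate curves at time $s$. For the inequality $w(x,t)\ge \A_\psi[w](x,t)$: take any $\gamma\in C^{ac}([0,t+s],M)$ with $\gamma(t+s)=x$, set $\eta(\tau)=\gamma(\tau+s)$ on $[0,t]$, and estimate the restriction to $[0,s]$ from below by $T_s\varphi(\gamma(s))=\psi(\eta(0))$ using the definition of $T_s\varphi$; the restriction to $[s,t+s]$ becomes, after the change of variable $\tau=r-s$, the integral $\int_0^t L(\eta(\tau),w(\eta(\tau),\tau),\dot\eta(\tau))\,d\tau$, since $w(\eta(\tau),\tau)=T_{\tau+s}\varphi(\eta(\tau))$. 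Taking the infimum over $\gamma$ on the left and over $\eta$ on the right yields the desired inequality. For the reverse inequality $w(x,t)\le \A_\psi[w](x,t)$: pick any $\eta\in C^{ac}([0,t],M)$ with $\eta(t)=x$, and invoke Proposition \ref{existence of calibrated curves} to obtain a minimizing curve $\sigma\in C^{ac}([0,s],M)$ with $\sigma(s)=\eta(0)$ realizing $T_s\varphi(\eta(0))$. Concatenate $\sigma$ and $\eta$ into an absolutely continuous curve on $[0,t+s]$ ending at $x$, and substitute into \eqref{solution semigroup} for $T_{t+s}\varphi(x)$; the integral splits exactly as before, yielding $w(x,t)\le\psi(\eta(0))+\int_0^t L(\eta(\tau),w(\eta(\tau),\tau),\dot\eta(\tau))\,d\tau$, and taking the infimum over $\eta$ concludes.

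The only delicate point is the bookkeeping inside the integrals: the Lagrangian at time $r\in[s,t+s]$ is evaluated at $T_r\varphi(\gamma(r))$, and after shifting to $\tau=r-s$ this becomes $T_{\tau+s}\varphi(\eta(\tau))=w(\eta(\tau),\tau)$, which is precisely the functional appearing in $\A_\psi[w]$; this is what makes the argument close. The absolute continuity of the concatenated curve, the additivity of the integral across the splitting time $s$, and the availability of a genuine minimizer from Proposition \ref{existence of calibrated curves} (so that infima are attained and not merely approached) are the three ingredients that make both inequalities rigorous. No further regularity of the Lagrangian beyond what was used in Proposition \ref{property of the Lagrangian} is required.
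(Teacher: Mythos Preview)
Your proof is correct and follows essentially the same approach as the paper: both show that the time-shifted function $w(x,t)=T_{t+s}\varphi(x)$ is a fixed point of the operator $\A_\psi$ with initial datum $\psi=T_s\varphi$, and then invoke the uniqueness of the fixed point from Lemma~\ref{fixed point existence}. The only cosmetic difference is that the paper presents the verification $\A_\psi[w]=w$ as a single chain of equalities (splitting the integral at $s$ and changing variables), whereas you break it into two inequalities via splitting and concatenation of curves; the underlying argument is identical.
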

\begin{proof}
It is easy to see $T_0  = Id$. It suffices to prove that
$T_{t+s}=T_t\circ T_s$ for any $t,s\geq 0$.

For every $\eta\in C^0(M, \mathbb{R})$ and $u \in C^0(M\times
\mathbb{R},\mathbb{R})$, we define
\begin{equation}
\A_t^{\eta}[u](x,t) = \inf_{\substack{\gamma(t)=x\\ \gamma\in
C^{ac}([0,t],M)}} \left\{ \eta\big(\gamma(0)\big) + \int_0^t
L\big(\gamma(s),u\big(\gamma(s),s\big), \dot\gamma(s)\big) \ ds
\right\}.
\end{equation}
By the definition of $T_t$, we have
\begin{equation*}
\begin{split}
T_t\circ T_s \varphi(x) &= \inf_{\substack{{\gamma}(t)=x\\
{\gamma}\in C^{ac}([0,t],M)}} \left\{
T_s\varphi\big({\gamma}(0)\big) + \int_0^{t}
L\big({\gamma}(\tau),T_{\tau}\circ T_s
\varphi\big({\gamma}(\tau)\big), \dot{{\gamma}}(\tau)\big) \ d\tau
\right\}\\
& = \A_t^{T_s \varphi} [T_t\circ T_s \varphi](x).
\end{split}
\end{equation*}

On the other hand,
\begin{equation*}
\begin{split}
T_{t+s} \varphi(x) &= \inf_{\substack{\gamma(t+s)=x\\ \gamma\in
C^{ac}([0,t+s],M)}} \left\{ \varphi\big(\gamma(0)\big) +
\int_0^{t+s} L\big(\gamma(\tau),T_\tau
\varphi\big(\gamma(\tau)\big),
\dot\gamma(\tau)\big) \ d\tau \right\}\\
& = \inf_{\substack{\gamma(t+s)=x\\ \gamma\in C^{ac}([0,t+s],M)}}
\left\{ \varphi\big(\gamma(0)\big) + \bigg(\int_0^{s} +
 \int_{s}^{t+s}\bigg) L\big(\gamma(\tau),T_\tau
\varphi\big(\gamma(\tau)\big), \dot\gamma(\tau)\big) \ d\tau \right\}\\
& = \inf_{\substack{\gamma(t+s)=x\\ \gamma\in C^{ac}([s,t+s],M)}}
\left\{ T_s\varphi\big(\gamma(s)\big) + \int_s^{t+s}
L\big(\gamma(\tau),T_\tau \varphi\big(\gamma(\tau)\big),
\dot\gamma(\tau)\big) \ d\tau \right\}\\
& = \inf_{\substack{\bar{\gamma}(t)=x\\ \bar{\gamma}\in
C^{ac}([0,t],M)}} \left\{ T_s\varphi\big(\bar{\gamma}(0)\big) +
\int_0^{t} L\big(\bar{\gamma}(\tau),T_{\tau+s}
\varphi\big(\bar{\gamma}(\tau)\big),
\dot{\bar{\gamma}}(\tau)\big) \ d\tau \right\}  \\
& = \A_t^{T_s \varphi} [T_{t+s} \varphi](x).
\end{split}
\end{equation*}
By Lemma \ref{fixed point existence}, we know $\A_t^{T_s\varphi}$
has a unique fixed point, i.e. $T_{t+s}=T_t\circ T_s$. This completes
the proof of the Lemma \ref{semi-group property}.
\end{proof}

\subsection{The calibrated curve and its regularity}It is convenient to introduce
the following notion of calibrated curves of the solution semigroup. Our goal in this
section is to conclude the regularity of such calibrated curves.

\begin{definition}
Let $t>0$. We say that a
continuous curve $(\gamma, u): [0,t]\rightarrow M\times \mathbb{R}$
is a calibrated curve for \eqref{Cauchy problem}, if $\gamma\in C^{ac}([0,t],M)$ with
$\gamma(t)=x$ and for any $s\in [0,t]$ we have the
following equality
\begin{equation}
\begin{split}
u(s)  & = \varphi(\gamma(0)) + \int_0^s
L\big(\gamma(\tau),u(\tau), \dot\gamma(\tau)\big) \ d\tau\\
& = T_s \varphi(\gamma(s)).
\end{split}
\end{equation}
\end{definition}

In the next, we will consist in showing the following regularity result.
\begin{theorem} \label{regularity result}
Assume that (H1)-(H5) hold.
The calibrated curve
$(\gamma, u)$ for \eqref{Cauchy problem} is $C^1$ and the curve
\[ \big(\gamma(s),u(s),p(s)=\frac{\partial
L}{\partial \dot{x}}(\gamma(s),u(s),\dot{\gamma}(s))\big) \qquad
\forall ~s\in [0,t]
\]still referred as calibrated curves for convenience if there is no confusion,
satisfies the characteristic equation \eqref{characteristic equ}.
\end{theorem}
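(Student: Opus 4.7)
My plan is to recast the calibration of $(\gamma,u)$ as a non-local variational problem for a smooth Lagrangian and then extract the characteristic equation via an adjoint first-variation calculation. First, by the semigroup property (Lemma~\ref{semi-group property}), every restriction of a calibrated curve is itself calibrated. For each admissible competitor $\eta\in C^{ac}([0,t],M)$ with $\eta(t)=\gamma(t)$, I introduce $u_\eta:[0,t]\to\mathbb{R}$ as the unique solution of
\begin{equation*}
\dot u_\eta(\tau) = L\big(\eta(\tau),u_\eta(\tau),\dot\eta(\tau)\big), \qquad u_\eta(0)=\varphi(\eta(0)),
\end{equation*}
whose existence and uniqueness follow from (H4). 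The semigroup property yields the subsolution inequality $T_{\tau_2}\varphi(\eta(\tau_2))-T_{\tau_1}\varphi(\eta(\tau_1))\le\int_{\tau_1}^{\tau_2}L(\eta,T_s\varphi(\eta),\dot\eta)\,ds$, and a standard sub-/supersolution comparison using the monotonicity of $L$ in $u$ (Proposition~\ref{property of the Lagrangian}(2)) then forces $T_\tau\varphi(\eta(\tau))\le u_\eta(\tau)$ for all $\tau$. In particular $u(t)\le u_\eta(t)$, with equality at $\eta=\gamma$ by calibration, so $\gamma$ minimizes the non-local functional $J[\eta]:=u_\eta(t)$.

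Next, I compute the first variation of $J$ in a local chart. For $\eta=\gamma+\epsilon h$ with $h(t)=0$, the derivative $v(\tau):=\partial_\epsilon u_\eta|_{\epsilon=0}$ satisfies the linear ODE
\begin{equation*}
\dot v = L_x\cdot h + L_u\cdot v + L_v\cdot\dot h, \qquad v(0)=\varphi'(\gamma(0))\,h(0),
\end{equation*}
with coefficients evaluated at $(\gamma,u,\dot\gamma)$. Introducing the adjoint $q$ with $\dot q = -L_u\,q$ and $q(t)=1$, integration by parts of $\tfrac{d}{d\tau}(qv)$ yields
\begin{equation*}
v(t)=q(0)\big(\varphi'(\gamma(0))-L_v(0)\big)\,h(0)+\int_0^t\Big[qL_x-\tfrac{d}{d\tau}(qL_v)\Big]\,h\,d\tau.
\end{equation*}
Minimality of $\gamma$ forces $v(t)=0$ for every admissible $h$; varying $h$ freely in the interior, and then varying $h(0)$, produces the Euler-Lagrange relation $\tfrac{d}{d\tau}L_v=L_x+L_u L_v$ along $\gamma$ together with the transversality $L_v(0)=\varphi'(\gamma(0))$. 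Setting $p(\tau):=L_v(\gamma,u,\dot\gamma)(\tau)$ and using the Legendre identities $\dot\gamma=H_p$, $L_x=-H_x$, $L_u=-H_u$, $L=p\cdot\dot\gamma-H$ recovers exactly the characteristic system \eqref{characteristic equ}.

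Finally, from Proposition~\ref{property of the Lagrangian}(1) and a Tonelli-type argument in the spirit of Theorem~\ref{compact lemma}, the minimizer $\gamma$ is a priori Lipschitz and hence differentiable almost everywhere. Since $H\in C^\infty$, \eqref{characteristic equ} has smooth coefficients, and an ODE bootstrap promotes $\gamma$ from Lipschitz to $C^\infty$, yielding in particular the claimed $C^1$ regularity. The main technical obstacle is the rigorous justification of the first variation: one must show that $\epsilon\mapsto u_{\gamma+\epsilon h}$ is differentiable with derivative solving the linearized ODE above, which amounts to smooth dependence of the ODE flow on the (only Lipschitz) base curve $\gamma$ and on $\epsilon$. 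This is handled by regularizing $\dot\gamma$ and using the uniform bound $|L_u|\le\lambda$ from (H4) to pass to the limit in the adjoint identity.
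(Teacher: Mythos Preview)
Your reduction to the functional $J[\eta]=u_\eta(t)$ is correct and elegant: the comparison $T_\tau\varphi(\eta(\tau))\le u_\eta(\tau)$ indeed follows from the subsolution inequality together with (H5), and uniqueness of the ODE for $u_\gamma$ (from (H4)) gives $J[\gamma]=u(t)$. The formal adjoint calculation is also right and does recover the characteristic system. This is a genuinely different route from the paper, which never takes a first variation: the paper instead fixes a point $t_0$ where $\gamma$ is differentiable, builds a local \emph{classical} solution $S$ of the Hamilton--Jacobi equation by shooting characteristics from $(x_0,u_0)$, and then uses a monotonicity comparison (via (H5)) to show $S(\gamma(s),s)=u(s)$ locally, so that $(\gamma,u)$ coincides with a characteristic near $t_0$; completeness (H3) then propagates this globally.

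The gap in your argument is the rigorous first variation, and your proposed fix does not close it. You need $\epsilon\mapsto J[\gamma+\epsilon h]$ differentiable at $\epsilon=0$, but the coefficients in the ODE for $u_{\gamma+\epsilon h}$ involve $L(\gamma+\epsilon h,\cdot,\dot\gamma+\epsilon\dot h)$ with $\dot\gamma$ only in $L^1$ a priori. Regularizing $\dot\gamma$ to $\dot\gamma_\delta$ is circular: $\gamma_\delta$ is no longer a minimizer of $J$, so $\partial_\epsilon J[\gamma_\delta+\epsilon h]|_{\epsilon=0}$ need not vanish, and there is nothing to pass to the limit in. Your appeal to Theorem~\ref{compact lemma} for an a priori Lipschitz bound is also unfounded: that theorem gives only $C^0$-compactness and lower semicontinuity of $A_u$, not a bound on $\|\dot\gamma\|_\infty$. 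A viable repair would be to first prove a Lipschitz bound on minimizers of $J$ by a direct comparison (replacing $\gamma$ on a short subinterval by a geodesic and using the calibration identity $u(\tau_2)-u(\tau_1)=\int L(\gamma,u,\dot\gamma)$ together with superlinearity), and only then invoke the DuBois--Reymond lemma on $\int_0^t\big[(qL_x)h+(qL_v)\dot h\big]\,d\tau=0$ to conclude $qL_v\in W^{1,1}$ without integrating by parts first. Finally, note that $\varphi$ is only assumed continuous, so $\varphi'(\gamma(0))$ need not exist; restrict to variations with $h(0)=0$ and drop the transversality claim, which is anyway not part of the theorem. The paper's geometric approach sidesteps all of this by never differentiating in the competitor, at the cost of a more hands-on local construction.
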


\begin{proof}
Suppose that $(\gamma(s), u(s))$ is a calibrated curve for \eqref{Cauchy problem} with $\gamma(t) =x$.
From the definition of the calibrated curves and Theorem \ref{compact lemma},
we know that $\gamma(s)$ is
absolutely continuous for any $s\in [0,t]$ and so it is
differentiable almost everywhere.

Let us start by fixing $t_0\in[0,t]$ where $\gamma$ is
differentiable. Let $k= \|\dot{\gamma}(t_0)\|$.

To fix notation, we denote
\begin{equation*}
x_0 = \gamma(t_0), v_0 = \dot{\gamma}(t_0), u_0 = T_{t_0}\varphi(x_0), p_0 = \frac{\partial L}{\partial v}(x_0, u_0, v_0).
\end{equation*}

Denote $\Phi^t:T^*M\times\mathbb{R}\circlearrowleft$ the flow of
diffeomorphisms generated by the characteristic equation
\eqref{characteristic equ}. Let
\begin{equation*}
\begin{split}
\overline{B}_{2k} &= \left\{ v\in T_{x_0} M ~:~ \|v\| \leq 2k \right\}, \\
B_{2k} &=  \mathcal{L}\overline{B}_{2k},
\end{split}
\end{equation*}where $\mathcal{L}$ is the Legendre transform associated with $L$.

When $\epsilon$ is sufficiently small, by the characteristic method, it is easy to check the following facts.
\begin{prop}\label{basic facts}
\begin{itemize}
\item [(1)] For every $0<t<\epsilon$, $\Phi^t$ considered as a mapping from $\{x_0\}\times \{u_0\}\times B_{2k}$
to its image is a diffeomorphism.

\item [(2)] Let $\Phi: (0,\epsilon) \times \{x_0\}\times \{u_0\}\times B_{2k} \rightarrow \mathbb{R}\times \mathbb{R}\times T^* M$.
Denote $\overline{\Omega}_{2k}^\epsilon$, a subset of $\mathbb{R}\times \mathbb{R}\times T^* M$, the image of $\Phi$.
Let \[\Omega^{\epsilon}_{2k} = \pi (\overline{\Omega}_{2k}^\epsilon)\]
where $\pi: \mathbb{R}\times \mathbb{R}\times T^*M\rightarrow \mathbb{R}\times M$,
$(t,u,x, p) \mapsto (t,x)$.\newline
Then, $\pi\circ \Phi$ is a diffeomorphism.

\item [(3)] For simplicity, we let
\begin{equation*}
\Gamma(s) = \gamma(s+t_0),
v(s) = u(s+t_0),
\end{equation*} where $\Gamma(0) = \gamma(t_0) = x_0,v(0) = u(t_0) = u_0$.\newline
Then, the graph of $\Gamma$ on $[0,\epsilon]$ is included in $\Omega^{\epsilon}_{2k}$.
\end{itemize}
\end{prop}

Now we claim that there exists a
$\epsilon >0$ such that $\gamma(s)$ is differentiable on
$[t_0,t_0+\epsilon]$ and the curve
\[
\big(x(s)=\gamma(s),u(s),p(s)=\frac{\partial L}{\partial
\dot{x}}(\gamma(s),u(s),\dot{\gamma}(s))\big) \text{ with }s\in
[t_0,t_0+\epsilon]
\]generated by the calibrated curve
$(\gamma,u)$ satisfies the characteristic equation~\eqref{characteristic equ}.
We will divide our proof into two steps.

\textbf{Step 1.}
We will first construct a classical solution of \eqref{Cauchy problem} in $\Omega^\epsilon_{2k}$.

We denote the rectangle in $M\times\mathbb{R}$ by
\[
I_{\epsilon,\tau}(x,s) = \{(y,t)\in M\times
\mathbb{R}~:~dist(y,x)\leq \epsilon,~|t-s|\leq \tau\},
\]where $dist$ denotes the distance on $M$ associated with the Riemannian metric $g$.
In particular, we denote $I_{\epsilon}(x) = \{y\in M~:~dist(y,x)\leq
\epsilon\}$ the rectangle in $M$.

Later on, we will always choose $\epsilon_0,\tau_0>0$ such that
$\Omega^\epsilon_{2k} \subseteq I_{\epsilon_0,\tau_0}(x_0,t_0)$ is included in
some local coordinate chart $U\times\mathbb{R}$ of
$M\times\mathbb{R}$. We have the following fundamental lemma.
\begin{lemma}\label{representation for H-J eq}
Let $\overline{\varphi}_p(x) = u_0 + p\cdot(x-x_0)$. For every $(x,s)\in \Omega^\epsilon_{2k}$, there exist
a $p_0=p_0(x,s)\in B_{2k}$ such that
the Cauchy problem for the Hamilton-Jacobi equation
\begin{equation}\label{Hamilton-Jacobi method}
\left\{\!\!\!
  \begin{array}{rl}
   &\frac{\partial S}{\partial s} + H(x, S, d_x S)=0,\\
   &S(x,t_0)=\overline{\varphi}_{p_0}(x);
  \end{array}
\right.
\end{equation}
has a classical solution $S(x,s)$ in $\Omega^\epsilon_{2k}$. In particular, we have
\begin{equation}
S(x,s)= u_0 + \int_{t_0}^s L\big(x(\tau), S(x(\tau),\tau), \dot{x}(\tau)\big)\ d\tau.
\end{equation}
\end{lemma}
\begin{proof}
We will construct the specific solution below. From the fundamental
existence and uniqueness theorem of ordinary differential equations,
we can take $\tau_0$ small enough such that for each initial value
$\xi_0 \in T^*M \times \mathbb{R}$, there is a characteristic curve $\xi$ satisfying
\begin{equation}
\left\{\!\!\!
  \begin{array}{rl}
   &\dot{\xi}(t) = E(\xi(t))\\
   &\xi(0)=\xi_0,
  \end{array}
\right.
\end{equation}where
\[
E(x,p,u)=\left(\frac{\partial H}{\partial p},-\frac{\partial H}{\partial x} - \frac{\partial H}{\partial u}\ p,
\frac{\partial H}{\partial p} \ p - H\right)
\] is the vector field of \eqref{characteristic equ}.

Due to Proposition \ref{basic facts},  for any $(x,s)\in \Omega^\epsilon_{2k}$,
there exists a unique $p_0=p_0(x,t) \in B_{2k}$ such that for
$t_0\leq t\leq s$  we have the characteristic curve
\begin{equation}
\left\{\!\!\!
  \begin{array}{rl}
  &\Phi^t\big(x_0,p_0,u_0\big) =
  (x(t),p(t),u(t)),\\
  &x(s)=x.
\end{array}
\right.
\end{equation}

Let $S(x,s) = u(s)$, which is a classical solution of
\eqref{Hamilton-Jacobi method} in $\Omega^\epsilon_{2k}$ when $\epsilon$ is sufficiently small.

Hence, we can write
\begin{equation*}
S(x,s)= u_{0} + \int_{t_0}^s L\big(x(\tau), S(x(\tau),\tau), \dot{x}(\tau) \big)\ d\tau,
\end{equation*}which completes the proof of the lemma.
\end{proof}

Before going into the second step, we would point out another fact
which asserts the variational property of the classical solution $S(x,s)$.

Let $\xi:[t_0, s]\rightarrow I_{\epsilon}(x_0) \subseteq M$
with $\xi(t_0)=x_0, \xi(s)=x$ be an absolutely
continuous curve. Since $S(x,s)$ is $C^1$ and the
map $[t_0,s]\rightarrow \mathbb{R}$, $\tau\mapsto
S(x(\tau),\tau)$ is absolutely continuous and thus
we have
\[
S(\xi(s), s) -
S(\xi(t_0), t_0) = \int_{t_0}^{s}\left\{
\frac{\partial S}{\partial s}(\xi(\tau),\tau) +
\frac{\partial S}{\partial x}(\xi(\tau),\tau)
\dot{\xi}(\tau)\right\}\ d\tau.
\]

For each $\tau$ where $\dot{\xi}(\tau)$ exists, the Fenchel inequality
implies
\begin{equation*}
\frac{\partial S}{\partial x}(\xi(\tau),\tau)
\dot{\xi}(\tau) \leq H\big(\xi(\tau),
S(\xi(\tau),\tau)),\frac{\partial
S}{\partial x}(\xi(\tau),\tau)\big) + L(\xi(\tau),
S(\xi(\tau),\tau)), \dot{\xi}(\tau)).
\end{equation*}

Since $S$ satisfies \eqref{Hamilton-Jacobi
method}, we have
\begin{equation}\label{Classical Hamilton-Jacobi method}
S(x,s) - S(x_0,t_0)
\leq \int_{t_0}^{s}
L(\xi(\tau),S(\xi(\tau),\tau), \dot{\xi}(\tau))\  d\tau.
\end{equation}

It is not difficult to check that there exists a $C^1$ curve $\xi$
such that \eqref{Classical Hamilton-Jacobi method} is an equality if
and only if
\[
\dot{\xi}(\tau) = \frac{\partial H}{\partial p}\bigg(x,
S(\xi(\tau),\tau),
\partial_x S(\xi(\tau),\tau)\bigg).
\]

Therefore, we can write the function $S(x,s)$ as
\[
S(x,s) = u_0 +
\inf_{\substack{\xi(s)=x,\xi(t_0)=x_0\\
\xi\in C^{ac}([t_0,s],I_{\epsilon}(x_0))}}
\int_{t_0}^{s} L(\xi(\tau),S(\xi(\tau),\tau),
\dot{\xi}(\tau))\  d\tau.
\]

\textbf{Step 2.} We will show that the classical solution $S(x,t)$ constructed above
is the same as $u(x,t)$ at the point $(\gamma(s),s)$ for any $s\in (t_0,t_0 + \epsilon)$.

We first introduce the following lemma.
\begin{lemma}\label{action minimizing inside the domain}
Let $\epsilon$ be sufficient small. For any $(x,s)\in
\Omega^\epsilon_{2k}$, if $(\gamma,u)$ is a calibrated curve for
\eqref{Cauchy problem} such that
\[\gamma(t_0)=x_0,~u(t_0)=u_0,~\gamma(s)=x,\] then we
have
\[
dist(\gamma(\tau), x_0)<\epsilon_0\qquad \forall ~\tau\in [t_0,s].
\]
\end{lemma}
\begin{proof}
We suppose by contradiction that $\gamma([t_0, s])\nsubseteq
\mathring{I}_{\epsilon_0}(x_0)$ which denotes the inner points of the
rectangle $I_{\epsilon_0}(x_0)$, i.e. there exists $t_0<t_1<s$
such that
\[
\gamma([t_0,t_1))\subseteq \mathring{I}_\epsilon(x_0) \text{ and }
dist(\gamma(t_1),x_0) = \delta.
\]

By the fiberwise superlinear growth of $L$ and Lipschitz continuity
of $L$ with respect to $u$, there exists $C_1>-\infty$ such that for
every $\dot{x}\in T_x M$, we have
\begin{equation}\label{estimate the Lagrangian}
L(x, u, \dot{x}) \geq L(x, 0, \dot{x})- \lambda |u|\geq \|\dot{x}\|
+C_1 -\lambda |u|.
\end{equation}

Since $u$ is continuous, there exists a $K>0$ such that $|u(\tau)|\leq K$
for every $\tau\in [t_0, s]$. Consequently, we obtain from above
inequality \eqref{estimate the Lagrangian} that
\begin{equation}\label{estimate A_u}
A_u(\gamma) = A_u(\gamma|_{[t_0,t_1]}) +
A_u(\gamma|_{[t_1,s]}) \geq \delta + (C_1- \lambda K
)(s-t_0).
\end{equation}

Due to the definition of $(\gamma, u)$, we have
\[
A_u(\gamma)= u(s) - u (t_0).
\]

Hence, letting $s\rightarrow t_0$ in \eqref{estimate A_u}, by
the continuity of $u$, we obtain $0\geq \delta$, which is a
contradiction.
\end{proof}

We continue now with the proof of the second step.

We know that $(\gamma,u)$ is a calibrated curve of  \eqref{Cauchy problem} for $s\in (t_0,t_0 + \epsilon)$ and so we have by the semigroup property of $\{T_t\}_{t\geq 0}$ that
\begin{equation}
\begin{split}
u(s) & = u_0 + \inf_{\substack{\xi(s)=\gamma(s),\xi(t_0)=x_0\\
\gamma\in C^{ac}([t_0,s],I_{\epsilon_0}(x_0))}} \int_{t_0}^s L(\xi(\tau), u(\tau), \dot{\xi}(\tau))\ d\tau\\
& = u_0 + \int_{t_0}^s L(\gamma(\tau), v(\tau), \dot{\gamma}(\tau))\ d\tau.
\end{split}
\end{equation}

Let us denote $\Psi(s) =S(\gamma(s),s) - u(s)$.
We will first claim that along the curve $\gamma$ the quantity $\Psi(s)\leq 0$ for any $s\in (t_0,t_0+ \epsilon)$.

Using a contradiction argument, we assume that there exists an $s_0>t_0$ such that $\Psi(s_0) >0 $. Namely,
\begin{equation}\label{difference of the barrier and S}
\Psi(s_0) \leq \Psi(s) + \int_{s}^{s_0}\left[L(\gamma(\tau), S(\gamma(\tau),\tau), \dot{\gamma}(\tau)) - L(\gamma(\tau), u(\tau), \dot{\gamma}(\tau)) \right]\ d\tau.
\end{equation}

Hence, by the continuity of $\Psi$, one can define
\[
s_1 = \inf \{ s\in [t_0,s_0]~:~ \Psi|_{[s,s_0]}> 0\}.
\]

Clearly $\Psi(s_1) = 0$. Take $s=s_1$ in \eqref{difference of the barrier and S}, we have
\begin{equation*}
\begin{split}
\Psi(s_0) &\leq \int_{s_1}^{s_0}\left[L(\gamma(\tau), S(\gamma(\tau),\tau), \dot{\gamma}(\tau)) - L(\gamma(\tau), u(\tau), \dot{\gamma}(\tau)) \right]\ d\tau\\
&\leq 0.
\end{split}
\end{equation*}The last inequality holds because of the proper condition (H5).
It contradicts the assumption that $\Psi(s_0)>0$ and concludes the claim.

Likewise, along the characteristic curve $x$, the quantity $\Psi(s) \geq 0$ for any $s\in (t_0,t_0+ \epsilon)$.
Thus, we have $S(\gamma(s),s) = u(s)$ which shows this step.

Consequently, due to the arbitrariness of $s$, we conclude that
there exists an $\epsilon>0$ such that $\gamma$ is differentiable on
$[t_0, t_0 + \epsilon]$ and the curve
\[
\big(x(s)=\gamma(s),u(s),p(s)=\frac{\partial L}{\partial
\dot{x}}(\gamma(s),u(s),\dot{\gamma}(s))\big) \text{ with }s\in
[t_0,t_0+\epsilon]
\]satisfies the characteristic equation \eqref{characteristic equ}.

We now extend such local results to the global ones using a similar argument in \cite{Mather'91} or \cite{Fathi'08}.
Suppose that there exists a $t_1\in (t_0, t)$ such that the curve generated by the calibrated curve $(\gamma,u)$
coincides with a characteristic in $[t_0, t_1)$ and $[t_0,t_1)$ is the maximal interval
on which this curve coincides with a characteristic.
Since $(\gamma, u)([t_0, t_1))$ is contained in a compact set $\gamma([0,t])\times u([0,t])$, by (H3),
the characteristic curve can be extended to the compact closure $[t_0,t_1]$.
Therefore, $\dot\gamma(t_1)$ exists. We apply the above argument (Step 1 and Step 2) and obtain
that $\gamma$ is differentiable on $[t_1, t_1 + \epsilon]$ and the curve
\[
\big(x(s)=\gamma(s),u(s),p(s)=\frac{\partial L}{\partial
\dot{x}}(\gamma(s),u(s),\dot{\gamma}(s))\big) \text{ with }s\in
[t_1,t_1+\epsilon]
\]satisfies the characteristic equation \eqref{characteristic equ}.
Because the characteristic curve is unique in the neighborhood of $t_1$, the curve
generated by the calibrated curve $(\gamma,u)$
coincides with a characteristic in $[t_0, t_1+\epsilon)$, which is a contradiction with the maximality of $t_1$.
Hence, $t_1 = t$.

Consequently, it is easy to see that the calibrated curve is differentiable on $[0, t]$ and satisfies the
characteristic equation, which completes the proof of the theorem.
\end{proof}

\section{Weak KAM type framework for Hamilton-Jacobi equations}\label{weak KAM type framework}

\subsection{Existence of variational solutions for \eqref{Cauchy problem}}
Following \cite{Fathi'97a, Contreras'preprint, Yan'12}, we give
the analogous definition of the variational solutions for \eqref{Cauchy problem} ( or weak KAM solutions for \eqref{Dirichlet problem})  with a
dynamical meaning in our setting as follows.

\begin{definition}[Variational Solutions]
We say that $U:M\times\mathbb{R}\rightarrow \mathbb{R}$ is a
 variational solution of \eqref{Cauchy problem} if the
following are satisfied:
\begin{enumerate}[(1)]
\item For any $(x,t_1),(y,t_2)\in M\times\mathbb{R}$ with $0\leq t_1<t_2$,
we have
\begin{equation*}
U(y,t_2) - U(x,t_1) \leq \inf_{\substack{\gamma(t_1)=x,\gamma(t_2)=y\\
\gamma\in C^{ac}([t_1,t_2],M)}}\int_{t_1}^{t_2} L(\gamma(s),
U(\gamma(s),s), \dot{\gamma}(s))ds.
\end{equation*}

\item For any $(x,t)\in M\times\mathbb{R}$, there exists a $C^1$ curve $\gamma:[0,t]$ with $\gamma(t)=x$ such
that
\begin{equation*}
\begin{split}
U(x,t) - U(\gamma(s), s) &= \int_{s}^{t} L(\gamma(\tau),
U(\gamma(\tau),\tau), \dot{\gamma}(\tau))d\tau \\
&=\inf_{\substack{\xi(s)=\gamma(s),\xi(t)=x\\
\xi\in C^{ac}([s,t],M)}}\int_{s}^{t} L(\xi(\tau),
U(\xi(\tau),\tau), \dot{\xi}(\tau))d\tau \quad \forall~ 0\leq s<t.
\end{split}
\end{equation*}
\end{enumerate}

In particular, a variational solution of the stationary equation \eqref{Dirichlet problem} is also called a weak KAM solution. 
\end{definition}

We now claim
\begin{theorem}\label{solution semigroup generate weak KAM solution}
Assume that (H1)-(H5) hold. Then, the solution semigroup $\{T_t\}_{t\geq 0}$ we obtain in \eqref{solution semigroup}
acting on the initial value $\varphi(x)$ is a variational
solution of \eqref{Cauchy problem}.
\end{theorem}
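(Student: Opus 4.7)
The plan is to set $U(x,t) := T_t\varphi(x)$ and verify the two defining properties directly from the definition \eqref{solution semigroup} together with the semigroup property (Lemma \ref{semi-group property}), the existence of minimizers (Proposition \ref{existence of calibrated curves}), and the regularity of calibrated curves (Theorem \ref{regularity result}). Both properties reduce to standard Bellman-type dynamic-programming manipulations, once the implicit $u$-dependence of $L$ is tracked carefully.

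For property~(1), I would reparametrize the minimization occurring in $T_{t_2-t_1}(T_{t_1}\varphi)(y)$ from $[0,t_2-t_1]$ to $[t_1,t_2]$. The semigroup identity $T_s\varphi = T_{s-t_1}\circ T_{t_1}\varphi$ for $s\geq t_1$ converts the inner $u$-argument $T_r(T_{t_1}\varphi)(\sigma(r))$ appearing in the defining formula for $T_{t_2-t_1}$ into $T_s\varphi(\gamma(s))=U(\gamma(s),s)$, so that
\[
U(y,t_2)=\inf_{\gamma(t_2)=y}\Big\{U(\gamma(t_1),t_1)+\int_{t_1}^{t_2} L\big(\gamma(s),U(\gamma(s),s),\dot\gamma(s)\big)\,ds\Big\}.
\]
Restricting the infimum to curves with $\gamma(t_1)=x$ yields property~(1) immediately.

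For property~(2), I would apply Proposition \ref{existence of calibrated curves} to produce an absolutely continuous $\gamma:[0,t]\to M$ with $\gamma(t)=x$ realizing
\[
U(x,t)=\varphi(\gamma(0))+\int_0^t L\big(\gamma(\tau),U(\gamma(\tau),\tau),\dot\gamma(\tau)\big)\,d\tau,
\]
and then invoke Theorem \ref{regularity result} to upgrade $\gamma$ to a $C^1$ curve whose lift to $T^*M\times\mathbb{R}$ satisfies \eqref{characteristic equ}. For each $s\in[0,t]$ the restriction $\gamma|_{[0,s]}$ is an admissible candidate in the defining infimum for $T_s\varphi(\gamma(s))$, giving
\[
U(\gamma(s),s)\leq \varphi(\gamma(0))+\int_0^s L\big(\gamma(\tau),U(\gamma(\tau),\tau),\dot\gamma(\tau)\big)\,d\tau.
\]
Subtracting this inequality from the full calibration identity yields $U(x,t)-U(\gamma(s),s)\geq \int_s^t L\,d\tau$, and the reverse inequality is supplied by applying property~(1) on $[s,t]$ to the curve $\gamma|_{[s,t]}$. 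Equality therefore holds, and simultaneously the infimum on the right-hand side of property~(2) is attained by $\gamma|_{[s,t]}$.

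The only bookkeeping issue I anticipate is the implicit dependence of $L$ on $u$: one must verify that the $u$-argument appearing along $\gamma$ is consistently $T_\tau\varphi(\gamma(\tau))$, which is precisely the fixed-point characterization of $\A_t^{T_s\varphi}$ established during the proof of Lemma \ref{semi-group property}. Once this identification is in place, no further analytic estimates on $H$ or $L$ beyond those already used to build the semigroup are required, and the main obstacle is organizational rather than technical.
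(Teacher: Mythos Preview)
Your proposal is correct and follows essentially the same route as the paper: property~(1) comes from the dynamic-programming identity underlying Lemma~\ref{semi-group property} (the paper phrases it as a concatenation of a minimizer on $[0,t_1]$ with an arbitrary extension), and property~(2) comes from the existence of a minimizer together with the calibration at intermediate times, which the paper summarizes as ``follows from the proof of Lemma~\ref{semi-group property}'' while you spell out the two inequalities explicitly. The only adjustment needed is the order in which you cite Theorem~\ref{regularity result}: since that theorem applies to \emph{calibrated} curves (requiring the identity at every $s$), you should first run your two-inequality argument to establish calibration for the absolutely continuous minimizer, and only then invoke Theorem~\ref{regularity result} to obtain the $C^1$ regularity required in the definition of variational solution.
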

\begin{proof}
For any $(x,t_1),(y,t_2)\in M\times\mathbb{R}$ with $0\leq t_1<t_2$, by
the definition of the solution semigroup, we have
\begin{equation*}
\begin{split}
T_{t_2} \varphi(y) - T_{t_1} \varphi(x) &\leq \inf_{\substack{\gamma(t_2)=y, \gamma(t_1)=x\\
\gamma\in C^{ac}([0,t_2],M)}} \left\{ \varphi\big(\gamma(0)\big) +
\int_0^{t_2} L\big(\gamma(s),T_s \varphi\big(\gamma(s)\big),
\dot\gamma(s)\big) \ ds \right\}\\
&\quad - \inf_{\substack{\xi(t_1)=x\\
\xi\in C^{ac}([0,t_1],M)}} \left\{ \varphi\big(\xi(0)\big) +
\int_0^{t_1} L\big(\xi(s),T_s \varphi\big(\xi(s)\big),
\dot\xi(s)\big) \ ds \right\} \\
& \leq \inf_{\substack{\gamma(t_2)=y, \gamma(t_1)=x\\
\gamma\in C^{ac}([t_1,t_2],M)}} \left\{ \int_{t_1}^{t_2}
L\big(\gamma(s),T_s \varphi\big(\gamma(s)\big), \dot\gamma(s)\big) \
ds \right\}.
\end{split}
\end{equation*}
The last inequality holds because one can choose $\gamma\in
C^{ac}([0,t_2],M)$ such that $\gamma=\xi$ on the interval $[0,t_1]$
where $\xi\in C^{ac}([0,t_1],M)$ with $\xi(t_1)=x$ satisfies
\[
T_{t_1}\varphi(x) = \left\{ \varphi\big(\xi(0)\big) + \int_0^{t_1}
L\big(\xi(s),T_s \varphi\big(\xi(s)\big), \dot\xi(s)\big) \ ds
\right\}.
\]

All what remains is to show (2). For any $(x,t)\in M\times
\mathbb{R}^+$, there exists a minimizing curve $\gamma_t\in
C^{1}([0,t], M)$ with $\gamma_t(t) = x$ such that
\begin{equation*}
T_t \varphi(x) =  \varphi\big(\gamma_t(0)\big) + \int_0^t
L\big(\gamma_t(s),T_s \varphi\big(\gamma_t(s)\big),
\dot\gamma_t(s)\big) \ ds.
\end{equation*}
It follows from the proof of Lemma \ref{semi-group property} that
\begin{equation*}
T_t \varphi(x) - T_{s} \varphi\big(\gamma_t(s)\big) =
\int_s^t L\big(\gamma_t(\tau),T_{\tau}
\varphi\big(\gamma_t(\tau)\big),
\dot{\gamma}_t(\tau)\big) \ d\tau
\end{equation*}for any $s\in[0,t]$. This ends the proof of the theorem.

\end{proof}

\subsection{Relationship between variational solutions and viscosity solutions}
Crandall and Lions \cite{Lions'83} have introduced the following
notion of viscosity solutions which applies naturally to first-order Hamilton-Jacobi equations.
\begin{definition}[Viscosity solution]
A function $U:V\rightarrow \mathbb{R}$ is a viscosity sub-solution
of \eqref{Cauchy problem} on the open subset $V\subseteq M\times
\mathbb{R}$, if for every $C^1$ function $\phi :V\rightarrow
\mathbb{R}$ with $\phi\geq U$ everywhere and
$U(x_0,t_0)=\varphi(x_0,t_0)$ at every point $(x_0,t_0)\in V$, we
have
\[
\frac{\partial \phi}{\partial t}(x_0,t_0) + H(x_0, \phi(x_0,t_0),
d_x \phi(x_0,t_0))\leq 0.
\]

A function $U:V\rightarrow \mathbb{R}$ is a viscosity super-solution
of \eqref{Cauchy problem} on the open subset $V\subseteq M\times
\mathbb{R}$, if for every $C^1$ function $\psi :V\rightarrow
\mathbb{R}$ with $\psi\leq U$ everywhere and
$U(y_0,\tau_0)=\varphi(y_0,\tau_0)$ at every point $(y_0,\tau_0)\in
V$, we have
\[
\frac{\partial \psi}{\partial t}(y_0,\tau_0) + H(y_0,
\psi(y_0,\tau_0), d_x \psi(y_0,\tau_0))\geq 0.
\]

A function $u:V\rightarrow \mathbb{R}$ is a viscosity solution of
\eqref{Cauchy problem} on the open subset $V\subseteq
M\times\mathbb{R}$, if it is both a sub-solution and a
super-solution.
\end{definition}

We are now ready to establish the relationship between variational
solutions and viscosity solutions in our context.

\begin{theorem}
Assume that (H1)-(H5) hold. Then,
any variational solution of \eqref{Cauchy problem} is also a viscosity
solution and vice versa.
\end{theorem}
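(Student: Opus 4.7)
The plan is to prove the two implications separately: ``variational $\Rightarrow$ viscosity'' by pointwise verification of the sub- and super-solution inequalities, and ``viscosity $\Rightarrow$ variational'' by invoking uniqueness of viscosity solutions. For the first direction, fix a test point $(x_0, t_0) \in M \times (0, \infty)$ and a $C^1$ function $\phi$ with $\phi \geq U$ everywhere and $\phi(x_0, t_0) = U(x_0, t_0)$. For any $v \in T_{x_0} M$, choose a smooth curve $\gamma$ with $\gamma(t_0) = x_0$ and $\dot\gamma(t_0) = v$. Applying property (1) of the variational solution on $[t_0 - h, t_0]$ for small $h > 0$ and using $\phi(\gamma(t_0 - h), t_0 - h) \geq U(\gamma(t_0 - h), t_0 - h)$ together with the equality at $(x_0, t_0)$, I obtain
\begin{equation*}
\phi(x_0, t_0) - \phi(\gamma(t_0 - h), t_0 - h) \leq \int_{t_0 - h}^{t_0} L\bigl(\gamma(s), U(\gamma(s), s), \dot\gamma(s)\bigr) \, ds.
\end{equation*}
Dividing by $h$, sending $h \to 0^+$, and using continuity of $U$ and smoothness of $\phi$ give $\partial_t \phi(x_0, t_0) + d_x \phi(x_0, t_0) \cdot v \leq L(x_0, U(x_0, t_0), v)$. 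Taking the supremum over $v \in T_{x_0} M$ and applying the Fenchel--Legendre identity $H(x, u, p) = \sup_v \{p \cdot v - L(x, u, v)\}$ produces the viscosity sub-solution inequality.

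The super-solution argument is parallel, but now uses property (2): it supplies a distinguished $C^1$ calibrated curve $\gamma$ with $\gamma(t_0) = x_0$ along which $U(x_0, t_0) - U(\gamma(s), s) = \int_s^{t_0} L$. For a $C^1$ function $\psi$ with $\psi \leq U$ and $\psi(x_0, t_0) = U(x_0, t_0)$, the same comparison yields
\begin{equation*}
\psi(x_0, t_0) - \psi(\gamma(s), s) \geq \int_s^{t_0} L\bigl(\gamma(\tau), U(\gamma(\tau), \tau), \dot\gamma(\tau)\bigr) \, d\tau.
\end{equation*}
Dividing by $t_0 - s$, letting $s \to t_0^-$, and using the Fenchel inequality $p \cdot v - L(x, u, v) \leq H(x, u, p)$ with $v = \dot\gamma(t_0)$ delivers $\partial_t \psi(x_0, t_0) + H(x_0, U(x_0, t_0), d_x \psi(x_0, t_0)) \geq 0$, which is the super-solution inequality.

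For the converse direction, I would argue via uniqueness. Theorem \ref{solution semigroup generate weak KAM solution} asserts that the semigroup orbit $U(x, t) = T_t \varphi(x)$ is a variational solution of the Cauchy problem, and by the direction just established it is simultaneously a viscosity solution with the same initial datum $\varphi$. The Crandall--Lions comparison principle for $u_t + H(x, u, d_x u) = 0$ then forces any viscosity solution of the Cauchy problem to coincide with $T_t \varphi$, and hence to be variational. The main obstacle is precisely the availability of this comparison principle in the present $u$-dependent setting: without monotonicity of $H$ in $u$ the usual doubling-of-variables argument fails, because the standard exponential renormalization only converts pure Lipschitz dependence into genuinely proper dependence. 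This is exactly what (H5) together with (H4) supplies; compactness of $M$ eliminates boundary issues, and (H1)--(H2) ensure the Fenchel--Legendre duality used throughout is attained, so once the comparison principle is in place the rest is routine.
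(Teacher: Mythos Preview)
Your proposal is correct and follows essentially the same route as the paper: a direct verification of the sub- and super-solution inequalities for the forward implication, and an appeal to uniqueness of viscosity solutions (via a comparison principle made available by (H4)--(H5)) for the converse. The paper adds one technical refinement you gloss over: the comparison estimate it invokes requires one of the two solutions to be uniformly Lipschitz, so it first works on $M\times[\delta,T]$ where $T_t\varphi$ has this regularity and then sends $\delta\to 0$ using continuity in $t$ and the shared initial datum.
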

\begin{proof}
Let $V$ be an open subset of $M\times\mathbb{R}$.

(I). To prove that variational solutions are viscosity solutions, it
suffices to show that $U$ is both a viscosity sub-solution and a
viscosity super-solution.

(a) Let $\phi: V\rightarrow \mathbb{R}$ be $C^1$ such that
$U\leq\phi$ with equality at $(x_0,t_0)\in V$. Therefore,
\[
\phi(x_0,t_0) - \phi(x,t) \leq U(x_0, t_0) - U(x, t).
\]

Take $v\in T_{x_0}M$ and pick
$\gamma:(t_0-\delta,t_0+\delta)\rightarrow M$ a $C^1$ curve with
$\gamma(t_0)=x_0$ and $\dot{\gamma}(t_0)=v$. For
$t\in(t_0-\delta,t_0)$, by the above inequality and the definition
of a variational solution, we obtain
\begin{equation*}
\begin{split}
\phi(\gamma(t_0),t_0) - \phi(\gamma(t),t)&\leq U(\gamma(t_0), t_0) -
U(\gamma(t), t)\\
&\leq \int_{t}^{t_0} L(\gamma(s), U(\gamma(s),s),
\dot{\gamma}(s))ds.
\end{split}
\end{equation*}
Dividing by $t_0-t$ at each side of the above inequality, we have
\begin{equation*}
\frac{\phi(\gamma(t),t) - \phi(\gamma(t_0),t_0)}{t-t_0} \leq
\frac{1}{t_0-t} \int_{t}^{t_0} L(\gamma(s), U(\gamma(s),s),
\dot{\gamma}(s))ds.
\end{equation*}
Letting $t\rightarrow {t_0}_-$ yields
\begin{equation}\label{sub-solution inequality}
\frac{\partial \phi}{\partial t}(x_0,t_0) + d_x\phi(x_0,t_0) v -
L(x_0, \phi(x_0,t_0), v)\leq 0
\end{equation}

Taking supremum over $v\in T_{x_0} M$ for \eqref{sub-solution
inequality}, we obtain
\[
\frac{\partial \phi}{\partial t}(x_0,t_0) + H(x_0, \phi(x_0,t_0),
d_x \phi(x_0,t_0))\leq 0
\]
which shows that $U$ is a viscosity sub-solution.

(b) Suppose that $\psi: V\rightarrow \mathbb{R}$ be $C^1$ such that
$U\geq\phi$ with equality at $(x_0,t_0)\in V$. This implies
\[
\psi(x_0,t_0) - \psi(x,t) \geq U(x_0, t_0) - U(x, t).
\]

By the definition of variational solutions, we can choose a $C^1$ curve
$\gamma:[0, t_0]$ with $\gamma(t_0)=x_0$ such that
\begin{equation*}
U(\gamma(t_0),t_0) - U(\gamma(s), s) = \int_{s}^{t_0}
L(\gamma(\tau), U(\gamma(\tau),\tau), \dot{\gamma}(\tau))d\tau \quad
\forall ~0\leq s<t_0.
\end{equation*}
Therefore,
\begin{equation}\label{super-solution inequality}
\psi(\gamma(t_0),t_0) - \psi(\gamma(s), s) \geq \int_{s}^{t_0}
L(\gamma(\tau), U(\gamma(\tau),\tau), \dot{\gamma}(\tau))d\tau \quad
\forall ~0\leq s<t_0.
\end{equation}
We divide both sides of \eqref{super-solution inequality} by $t_0-s$
and get
\[
\frac{\psi(\gamma(s), s) - \psi(\gamma(t_0),t_0)}{s-t_0} \geq
\frac{1}{t_0-s}\int_{s}^{t_0} L(\gamma(\tau), U(\gamma(\tau),\tau),
\dot{\gamma}(\tau))d\tau \quad \forall s<t_0.
\]

Let $s\rightarrow {t_0}_-$ and we have
\[
\frac{\partial \psi}{\partial t}(x_0,t_0) + d_x\psi(x_0,t_0)
\dot{\gamma}(t_0) - L(x_0, \psi(x_0,t_0), \dot{\gamma}(t_0))\geq 0.
\]

This yields
\[
\frac{\partial \psi}{\partial t}(x_0,t_0) + H(x_0, \psi(x_0,t_0),
d_x \psi(x_0,t_0))\geq 0,
\]which completes the proof of the first part of the theorem.

(II). Notice that, by Theorem \ref{solution semigroup generate weak KAM solution},
$u(x,t) = T_t\varphi(x)$ is a variational solution of \eqref{Cauchy problem}. 
So, to prove that viscosity solutions are variational solutions,
it is enough to show that the viscosity for \eqref{Cauchy problem} is unique.

Before going into the proof of the uniqueness result, we will introduce the following estimate
whose proof is essentially given in \cite{Barles'13}[Section 5.2] and will be omitted here.
\begin{lemma}
Suppose that $H\in C^2$ satisfies (H5). Let $u_1(x,t), u_2(x,t)$ are two viscosity solutions of \eqref{Cauchy problem}.
If either $u_1(x, t)$ or $u_2(x,t)$ is uniformly Lipschitz continuous on $M\times [0,T]$, we have
\begin{equation}
\sup_{M\times [0, T]} (u_1-u_2) \leq \sup_{M} (u_1(x, 0)-u_2(x,0)).
\end{equation}
\end{lemma}

In the sequel, we will use this estimate to obtain the uniqueness of viscosity solution of \eqref{Cauchy problem}.
Let $u_1(x,t) = T_t \varphi(x)$ be the variational solution and so it is a viscosity solution.
Suppose that $u_2(x,t)$ is another viscosity solution of \eqref{Cauchy problem}.

Since, for any given $\delta>0$, $u_1(x,t)$ is uniformly Lipschitz continuous on $M\times [\delta, T]$, we have
\begin{equation*}
\sup_{M\times [\delta, T]} (u_1-u_2) \leq \sup_{M} (u_1(x, \delta)-u_2(x,\delta))
\end{equation*}and
\begin{equation*}
\sup_{M\times [\delta, T]} (u_2-u_1) \leq \sup_{M} (u_2(x, \delta)-u_1(x,\delta)).
\end{equation*}

Due to the arbitrariness of $\delta$, the continuity of $u_1(x,t)$ and $u_2(x,t)$ with respect to $t$ and the initial condition $u_1(x,0)=u_2(x,0)=\varphi(x)$, we obtain
\[
u_1(x,t) = u_2 (x,t) \qquad \forall ~(x,t)\in M\times [0,t],
\]which shows the equivalence relation between variational solutions and viscosity solutions.

\end{proof}

\section{convergence of the solution semigroup}\label{convergence section}
This section is devoted to showing that the solution semigroup $\{T_t\}_{t\geq 0}$
with an arbitrary $\varphi\in C^0(M,\mathbb{R})$ as initial condition
converges to a weak KAM solution of \eqref{Dirichlet problem} as $t\rightarrow +\infty$.

\subsection{Properties of the solution semigroup}
Before going into the details of the proof of our main theorem, we will first obtain the
several crucial properties of $\{T_t\}_{t\geq 0}$ and then 
show the Lipschitz property of variational solutions.

Here are two important properties of $\{T_t\}_{t\geq 0}$.
\begin{lemma}\label{monotonicity and nonexpanding}
Assume that (H1)-(H5) hold. Then,
\begin{itemize}
\item [(1)](Monotonicity)\qquad $\{T_t\}_{t\geq 0}$ is increasing.

\item [(2)](Non-expansiveness) \qquad $\{T_t\}_{t\geq 0}$ is non-expanding.
\end{itemize}
\end{lemma}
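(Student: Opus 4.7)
The plan is to deduce both statements from the $L^\infty$-comparison principle for viscosity solutions that was essentially invoked in the proof of uniqueness in the preceding theorem. By Theorem~\ref{solution semigroup generate weak KAM solution} together with the equivalence between variational and viscosity solutions, $u_i(x,t) := T_t\varphi_i(x)$ is a viscosity solution of \eqref{Cauchy problem} with initial data $\varphi_i$. It therefore suffices to prove the one-sided estimate
\begin{equation*}
\sup_{M\times[0,T]}\bigl(T_t\varphi_1 - T_t\varphi_2\bigr) \ \le\ \sup_M(\varphi_1 - \varphi_2)
\end{equation*}
for every $T > 0$, since (1) is precisely the case $\varphi_1 \le \varphi_2$ (the right-hand side is then non-positive), and (2) follows by applying the estimate in both directions and taking the maximum of the two bounds.

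To establish this estimate, I would invoke the comparison lemma from \cite{Barles'13} used in the uniqueness argument: for any $\delta>0$, under the assumption that one of the two solutions is uniformly Lipschitz on $M\times[\delta,T]$, that lemma yields
\begin{equation*}
\sup_{M\times[\delta,T]}(u_1 - u_2) \ \le\ \sup_M\bigl(u_1(\cdot,\delta)-u_2(\cdot,\delta)\bigr).
\end{equation*}
The required Lipschitz regularity for $t \ge \delta > 0$ follows from Theorem~\ref{regularity result}, which identifies each calibrated curve with a $C^1$ characteristic of \eqref{characteristic equ}; combined with a Tonelli-type compactness argument using (H1)--(H3), this confines the momenta $p(s)$ along calibrated curves reaching $M$ at time at least $\delta$ to a fixed compact subset of $T^*M$, producing a uniform Lipschitz constant for $T_t\varphi_i$ on $M\times[\delta,T]$. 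Letting $\delta \to 0^+$, the right-hand side tends to $\sup_M(\varphi_1 - \varphi_2)$ by continuity of $T_t\varphi$ at $t=0$, which is itself a consequence of the implicit definition \eqref{solution semigroup} and Theorem~\ref{compact lemma}.

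The step I expect to be most delicate is making the uniform Lipschitz regularity of $T_t\varphi$ on $M\times[\delta,T]$ fully rigorous. Theorem~\ref{regularity result} provides $C^1$ regularity along individual calibrated curves, but upgrading this to a uniform Lipschitz bound over all endpoints $(x,t) \in M\times[\delta,T]$ requires the a priori momentum bound just alluded to, which rests on the full strength of (H1)--(H3): superlinear growth, positive definiteness of the Hessian, and completeness of the flow together force the characteristics used in the variational representation to stay in a compact region of $T^*M\times\mathbb{R}$ over $[\delta,T]$. Once this uniform regularity is in hand, both monotonicity and non-expansiveness follow without further difficulty from the comparison principle.
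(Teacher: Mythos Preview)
Your approach is logically plausible but takes a substantially more roundabout route than the paper, and the step you yourself flag as ``most delicate'' is in fact a genuine gap as written.

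The paper's proof is entirely elementary and self-contained: it argues by contradiction directly from the implicit definition \eqref{solution semigroup}. Assuming $\varphi\le\psi$ but $T_{t_1}\varphi(x)>T_{t_1}\psi(x)$, it takes a calibrated curve $\Gamma$ for $\psi$, sets $\Psi(\tau)=T_\tau\varphi(\Gamma(\tau))-T_\tau\psi(\Gamma(\tau))$, locates the last time $t_2<t_1$ where $\Psi(t_2)=0$, and then uses only the fact that $L$ is \emph{decreasing} in $u$ (Proposition~\ref{property of the Lagrangian}(2), from (H5)) to force $\Psi(t_1)\le\Psi(t_2)=0$, a contradiction. Non-expansiveness is proved by the same trick with $\Psi$ shifted by $\|\varphi-\psi\|_\infty$. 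No regularity beyond continuity, no viscosity machinery, and no a priori Lipschitz bound is needed.

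By contrast, your argument imports the viscosity comparison lemma from \cite{Barles'13}, which requires one of the two solutions to be uniformly Lipschitz on $M\times[\delta,T]$. You propose to obtain this from Theorem~\ref{regularity result} plus an a priori momentum bound, but you do not actually carry this out: Theorem~\ref{regularity result} gives $C^1$ regularity of \emph{individual} calibrated curves, not a uniform bound on $|p(t)|$ over all endpoints. Deriving such a bound under (H1)--(H5) alone (note the lemma does \emph{not} assume (H6)) requires essentially the argument of Lemma~\ref{initial energy estimate} together with the monotone decay of $H$ along characteristics---nontrivial work that you have only gestured at. Moreover, in the paper the uniform Lipschitz claim in the uniqueness theorem is itself asserted rather than proved in place, so leaning on it here risks circularity in the logical structure.

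In short: your route can probably be made to work on a bounded time interval, but it trades a two-paragraph contradiction argument for a dependence on viscosity comparison plus an unproved a priori estimate. The paper's direct argument is both shorter and more robust, and you should at minimum be aware of it.
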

\begin{proof}
(1). For given $\varphi,~\psi\in C^0(M,\mathbb{R})$ with
$\varphi\leq \psi$, we suppose, by contradiction, that there exist
$t_1>0$ and $x\in M$ such that $T_{t_1}\varphi(x) > T_{t_1}
\psi(x)$.

By the definition and semi-group property of $\{T_t\}_{t\geq0}$, we
obtain
\begin{equation}\label{monotonicity estimate}
\begin{split}
&\quad T_{t_1}\varphi(x) - T_{t_1}\psi(x) \\&= \inf_{\substack{\gamma(t_1)=x\\
\gamma\in C^{ac}([0,t_1],M)}} \left\{ T_s\varphi\big(\gamma(s)\big)
+ \int_s^{t_1} L\big(\gamma(\tau),T_\tau
\varphi\big(\gamma(\tau)\big),
\dot\gamma(\tau)\big) \ d\tau \right\} \\
&\quad-\inf_{\substack{\gamma(t_1)=x\\ \gamma\in C^{ac}([0,t_1],M)}}
\left\{ T_s\psi\big(\gamma(s)\big) + \int_s^{t_1}
L\big(\gamma(\tau),T_\tau
\psi\big(\gamma(\tau)\big), \dot\gamma(\tau)\big) \ d\tau \right\} \\
& \leq T_s \varphi(\Gamma(s)) - T_s \psi(\Gamma(s)) \ +\\
&\qquad \int_s^{t_1}\left( L\big(\Gamma(\tau),T_\tau
\varphi\big(\Gamma(\tau)\big), \dot\Gamma(\tau)\big) -
L\big(\Gamma(\tau),T_\tau \psi\big(\Gamma(\tau)\big),
\dot\Gamma(\tau)\big)\right) \ d\tau
\end{split}
\end{equation}where $\Gamma\in C^{ac}([0,t_1],M)$ such that
\[
T_{t_1}\psi(x) = \psi(\Gamma(0)) + \int_0^{t_1} L\big(\Gamma(s),T_s
\psi\big(\Gamma(s)\big), \dot\Gamma(s)\big) \ ds.
\]

Let $\Psi(\tau) = T_\tau \varphi(\Gamma(\tau)) - T_\tau
\psi(\Gamma(\tau))$. We can rewrite \eqref{monotonicity estimate} as
\begin{equation*}
\Psi(t_1) \leq \Psi(s) + \int_s^{t_1} \left(
L\big(\Gamma(\tau),T_\tau \varphi\big(\Gamma(\tau)\big),
\dot\Gamma(\tau)\big) - L\big(\Gamma(\tau),T_\tau
\psi\big(\Gamma(\tau)\big), \dot\Gamma(\tau)\big)\right) \ d\tau.
\end{equation*}

Hence, by the continuity of $\Psi$ and the fact that $\Psi(0)\leq 0$
and $\Psi(t_1)>0$, we have that there exists $0\leq t_0<t_1$ such
that $\Psi(t_0) = 0$. We define
\begin{equation}\label{define t2}
t_2 = \inf \{~t\in [t_0,t_1]:~\Psi|_{[t,t_1]} >0 ~\}.
\end{equation}

Clearly $\Psi(t_2) = 0$. Therefore, by the monotonicity of $L$, we
obtain that
\begin{equation*}
0<\Psi(t_1) \leq \Psi(t_2) = 0 \quad \text{ for }s = t_2,
\end{equation*}which is a contradiction. This finishes the proof of
point (1) of the lemma.

We continue to prove (2) in the same spirit as (1). For each
$\varphi, \psi \in C^0(M, \mathbb{R})$, without loss of generality,
we suppose that there exist $t_1>0$ and $x\in M$ such that
$T_{t_1}\varphi(x) > T_{t_1} \psi(x) + \|\varphi - \psi\|_\infty$.

We just apply the same argument for \[\overline\Psi(\tau) = T_\tau
\varphi(\Gamma(\tau)) - T_\tau \psi(\Gamma(\tau)) - \|\varphi -
\psi\|_\infty\] instead of $\Psi(\tau) = T_\tau \varphi(\Gamma(\tau)) - T_\tau
\psi(\Gamma(\tau))$. We define $t_2$ by substituting $\Psi$ with
$\overline\Psi$ in \eqref{define t2} and have
\begin{equation*}
0<\overline{\Psi}(t_1) \leq \overline\Psi(t_2)=0,
\end{equation*}which is contradiction. Therefore, we have
\[
T_t\varphi(x) - T_t\psi(x) \leq \|\varphi - \psi\|_\infty.
\]
Likewise, one can get
\[
T_t\varphi(x) - T_t\psi(x) \geq - \|\varphi - \psi\|_\infty,
\]
which completes the proof of the lemma.
\end{proof}

Suppose that $\varphi$ is Lipschitz continuous with Lipschitz constant $\L$.
By the compactness of $M$, the Lipschitz continuity of $L$ with respect to $u$ and the fiberwise superlinearity of $L$, 
there exists a constant $C_{\L}$ such that
\begin{equation*}
L(x, u, \dot{x}) \geq \L \|\dot{x}\| + C_{\L} \qquad \forall ~(x,\dot{x})\in TM\text{ and $u$ is bounded}.
\end{equation*}
It follows that for every curve $\gamma:[0,t]\rightarrow M$, we have
\begin{equation*}
\begin{split}
\int_0^t L(\gamma(s), u(\gamma(s),s), \dot{\gamma}(s))\ ds &\geq \L\  dist(\gamma(0),\gamma(t)) +C_{\L} t\\
&\geq \varphi(\gamma(t)) - \varphi(\gamma(0)) +C_{\L} t.
\end{split}
\end{equation*}
We conclude that
\[
T_t\varphi(x) \geq \varphi(x) + C_{\L} t.
\]

On the other hand, using the constant curve $\gamma_x$ with $\gamma_x(s) = x$ for any $s\in[0,t]$, we obtain
\[
T_t\varphi(x) \leq \varphi(x) + \max_{s\in [0,t]} L(x, T_s\varphi(x), 0) t.
\]

Therefore, we have
\[
\|T_t\varphi - \varphi\|_\infty \leq t \max\{C_{\L}, \max_{x\in M, s\in[0,t]} L(x, T_s\varphi(x), 0)\}.
\]

Hence, by the semigroup property, we have
\begin{equation*}
\|T_{t_1}\varphi - T_{t_2}\varphi\|_\infty \leq 
\| T_{|t_1-t_2|} \varphi - \varphi \|_\infty \leq |t_1-t_2|\max\{C_{\L}, \max_{x\in M, s\in[0,t]} L(x, u(x,s), 0)\}.
\end{equation*}

In general, we have that for each $\varphi\in C^0(M,\mathbb{R})$ the map $t\mapsto T_t \varphi$ is uniformly continuous.

In order to obtain the Lipschitz property of variational solutions, we need the following crucial observation.
\begin{theorem}\label{uniform bounds for semigroup}
Suppose that the assumptions (H1)--(H6) hold.
For every $\varphi\in C^0(M,\mathbb{R})$, there exists
a $K\in\mathbb{R}^+$ such that
\[
\|T_t \varphi\|_\infty \leq K \qquad \forall~t\geq0.
\]
\end{theorem}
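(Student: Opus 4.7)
The plan is to sandwich $T_t\varphi$ between time-independent barriers built from the approximate sub-solution furnished by (H6). For any $\epsilon > 0$ I pick $v_\epsilon \in C^{1,1}(M)$ with $\max_{x\in M} H(x,c,d_x v_\epsilon(x)) \leq \epsilon$. The Fenchel inequality $L(x,u,\dot x) \geq \langle d_x v_\epsilon, \dot x\rangle - H(x,u,d_x v_\epsilon)$, combined with the monotonicity (H5) of $H$ in $u$ (applied when $u\leq c$) and its uniform Lipschitz constant (H4) (applied when $u>c$), yields the pointwise bound
\[
L(x,u,\dot x) \;\geq\; \langle d_x v_\epsilon(x), \dot x\rangle - \epsilon - \lambda\,(u-c)_{+}
\]
valid for every $(x,u,\dot x) \in TM \times \mathbb{R}$.

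For the lower bound on $T_t\varphi(x)$, I would pick a calibrated curve $\gamma$ attaining $T_t\varphi(x)$, which exists by Proposition \ref{existence of calibrated curves}, and integrate the pointwise inequality along $\gamma$; the telescoping $\int_0^t \langle d_x v_\epsilon, \dot\gamma\rangle\,ds = v_\epsilon(x) - v_\epsilon(\gamma(0))$ gives
\[
T_t\varphi(x) \;\geq\; v_\epsilon(x) - \max v_\epsilon + \min \varphi - \epsilon t - \lambda \int_0^t \bigl(T_s\varphi(\gamma(s))-c\bigr)_+\,ds.
\]
For the matching upper bound, I would test against the constant curve $\gamma\equiv x$, obtaining $T_t\varphi(x) \leq \varphi(x) + \int_0^t L(x,T_s\varphi(x),0)\,ds$; the monotone decrease of $L(x,\cdot,0)$ in $u$ (Proposition \ref{property of the Lagrangian}(2)) combined with the proper condition (H5) supplies enough dissipation at large $u$ to close a Gronwall-type comparison and bound $T_t\varphi$ above by a $t$-independent constant $K_{+}$.

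The hard part will be decoupling these two estimates. The lower bound still contains the integral $\lambda \int_0^t (T_s\varphi-c)_+\,ds$, which with $K_+$ in hand contributes at most $\lambda(K_+-c)_+\,t$—linear in $t$ and hence not yet uniform. To finish I expect to iterate the comparison using the semigroup and non-expansiveness properties from Lemma \ref{monotonicity and nonexpanding}, bootstrapping the $t$-linear bound into a $t$-uniform one, or to exploit the freedom in $\epsilon$ to absorb the remaining error. Conceptually, the essential point is that (H6) pins the equilibrium level of $T_t\varphi$ near $c$, while the proper condition (H5) provides the dissipation needed to damp out initial excursions from that level, and together they prevent any unbounded drift of $T_t\varphi$ as $t \to +\infty$.
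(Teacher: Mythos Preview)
Your intuition is right---comparison with the level $c$ from (H6) is what pins down the solution---but the sketch has a genuine gap, and the methods you propose for closing it will not work. The lower-bound estimate leaves the residual $\lambda\int_0^t (T_s\varphi-c)_+\,ds$, and the upper bound via the constant curve gives $T_t\varphi(x) \leq \varphi(x) + \int_0^t L(x,T_s\varphi(x),0)\,ds$. Neither becomes $t$-uniform under the stated hypotheses: (H5) asserts only that $H$ is increasing in $u$, with no lower bound on the rate, so $L(x,u,0)=-\inf_p H(x,u,p)$ need not become negative for large $u$ and there is no dissipation to feed a Gronwall argument. The freedom in $\epsilon$ kills only the $\epsilon t$ term, not the $\lambda$-term; and non-expansiveness (Lemma~\ref{monotonicity and nonexpanding}) compares two orbits of $T_t$, not one orbit against a constant, so it gives no bootstrap.

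The missing device is a \emph{crossing-time argument} together with the Mather barrier for the autonomous Lagrangian $L(\cdot,c,\cdot)$. For the lower bound, take a calibrated curve $(\gamma,u)$ realizing $T_t\varphi(x)$ and let $\tau_0$ be the last time in $[0,t]$ with $u(\tau_0)=c$ (or $\tau_0=0$ if $u<c$ throughout). On $[\tau_0,t]$ one has $u\leq c$ \emph{exactly}, hence $L(\gamma,u,\dot\gamma)\geq L(\gamma,c,\dot\gamma)$ with no error term, and the integral dominates the barrier $h_c^{t-\tau_0}(\gamma(\tau_0),x)$; since $c$ is the Ma\~n\'e critical level, classical Mather theory gives $\inf_{s>0}\min_{y,z}h_c^{s}(y,z)>-\infty$, which is the uniform lower bound. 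For the upper bound one does \emph{not} use the constant curve: test instead with a minimizer $\Gamma$ of $L(\cdot,c,\cdot)$ joining a fixed $x_0$ to $x$, apply the reverse inequality $L(\Gamma,u,\dot\Gamma)\leq L(\Gamma,c,\dot\Gamma)$ on the interval after the last time $u(\Gamma(\cdot),\cdot)$ crosses $c$ from below, and bound the resulting integral by $h_c^{t-\tau_0}(\Gamma(\tau_0),x)$, again uniformly bounded. The $v_\epsilon$/Fenchel machinery is not needed; the whole proof reduces to comparison with the autonomous system at level $c$ and the known uniform bounds on its barrier function.
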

\begin{proof}
For every $(x,t)\in M\times \mathbb{R}^+$, there exists a calibrated curve
\[
\big(\gamma(s), u(s), p(s)\big) \qquad s\in[0,t]
\]satisfying $\gamma(t) = x$ such that
\[
u(x, t) \equiv T_t\varphi (x) = \varphi(\gamma(0)) + \int_0^t L\big( \gamma(s), u(s), \dot{\gamma}(s)\big)\ ds.
\]

(I) We will first show that $T_t\varphi(x)$ is bounded from below. Suppose $u(x,t) < \a$.
Then, there are two cases along the curve $\gamma$:
\begin{itemize}
\item [(1)] There exists a $\tau_0\in [0,t)$ such that $u(\tau_0) = \a$ and $u(\tau)< \a$ when $\tau> \tau_0$.
  Consequently,
  \[
  u(x,t) = u(\tau_0) + \int_{\tau_0}^t L\big( \gamma(\tau), u(\tau), \dot{\gamma}(\tau)\big)\ d\tau.
  \]
  By (H5) and point (2) of Proposition \ref{property of the Lagrangian}, we have the following estimates:
  \begin{equation*}
  \begin{split}
  u(x, t) &\geq u(\tau_0) + \int_{\tau_0}^t L\big( \gamma(\tau), \a, \dot{\gamma}(\tau)\big)\ d\tau\\
  &\geq \a + \min_{x,y\in M} h_{\a}^{t-\tau_0}(x,y).
  \end{split}
  \end{equation*}where $h_{\a}^s(x, y)$ is the barrier function for the autonomous Lagrangian $L(x, \a, \dot{x})$.

\item [(2)]For every $\tau\in[0,t]$, we have $u(\tau) < \a$. Hence, we obtain:
\begin{equation*}
\begin{split}
u(x,t) &= u(0) + \int_0^t L\big( \gamma(\tau), u(\tau), \dot{\gamma}(\tau)\big)\ d\tau\\
       &\geq \min_{x\in M} \varphi(x) + \min_{x,y\in M} h_{\a}^t (x,y).
\end{split}
\end{equation*}
\end{itemize}
From Mather theory, we know that $u(x,t)$ is uniformly bounded independent of $t$ for both cases.
Take $K_1 = \min\{\a, \a + \min_{x,y\in M} h_{\a}^{t-\tau_0}(x,y), \min_{x\in M} \varphi(x) + \min_{x,y\in M} h_{\a}^t (x,y)\}$
and we obtain the uniform lower bound of $u(x,t)$ for any $(x,t) \in M\times \mathbb{R}^+$.

(II) We now show the uniform upper bound of $u(x,t)$. For every $(x,t) \in M\times\mathbb{R}^+$ and a given point $x_0\in M$,
one can find a minimizing curve $\Gamma$ for the autonomous Lagrangian $L(x,\a, \dot{x})$ such that $\Gamma(0)=x_0, \Gamma(t) =x$.
Suppose $u(x,t) < \a$. Then, there are two cases along the curve $\Gamma$:
\begin{itemize}
\item [(1)]There exists a $\tau_0\in [0,t)$ such that $u(\Gamma(\tau_0),\tau_0) = \a$ and $u(\Gamma(\tau), \tau)> \a$ when $\tau> \tau_0$.
So we have the estimates:
\begin{equation*}
\begin{split}
u(x,t) &\leq u(\Gamma(\tau_0), \tau_0) + \int_{\tau_0}^t L\big( \Gamma(\tau), u(\Gamma(\tau),\tau), \dot{\Gamma}(\tau)\big)\ d\tau\\
       &\leq \a + \int_{\tau_0}^t L\big( \Gamma(\tau), \a, \dot{\Gamma}(\tau)\big)\ d\tau\\
       &= \a + h_{\a}^{t-\tau_0}(\Gamma(\tau_0), x).
\end{split}
\end{equation*}

To get the upper bound for $u(x,t)$, it suffices to prove that $h_{\a}^{t-\tau_0}(\Gamma(\tau_0), x)$ is bounded. In fact,
due to the properties of the barrier function in Mather theory, when $\tau>1$, one can find $z$ such that
\begin{equation*}
h_{\a}^{\tau}(x,y) = h_{\a}^{\tau_0}(x, z) + h_{\a}^{\tau- \tau_0}(z, y).
\end{equation*}
We can have the fact that there exists a $A>0$ such that $|h_{\a}^{\tau} (x,y) | \leq A$ for every $\tau>1$.
Likewise, there exists a $B>0$ such that $|h_{\a}^{\tau} (x,y) | \leq B$ for every $\tau>\frac{1}{2}$.

Hence, we obtain
\begin{equation*}
\begin{split}
|h_{\a}^{\tau_0}(x,z)| &\leq A+B \qquad \text{when $\tau_0$ is small enough};\\
|h_{\a}^{\tau-\tau_0}(z,y)| &\leq A+B \qquad \text{when $\tau-\tau_0$ is small enough}.
\end{split}
\end{equation*}
Notice that we have $\Gamma(\tau_0)$ such that the following equality holds:
\begin{equation*}
h_{\a}^{t}(x_0,x) = h_{\a}^{\tau_0}(x_0, \Gamma(\tau_0)) + h_{\a}^{t- \tau_0}(\Gamma(\tau_0), x),
\end{equation*}which shows that $u(x,t)$ is bounded from above.

\item [(2)] For every $\tau\in[0,t]$, we have $u(\tau) > \a$. Hence, we obtain:
\begin{equation*}
\begin{split}
u(x,t) &\leq \varphi(x_0) + \int_0^t L\big( \Gamma(s), u(\Gamma(s), s), \dot{\Gamma}(s)\big)\ ds\\
       &\leq \varphi(x_0) +  \int_0^t L\big( \Gamma(s), \a, \dot{\Gamma}(s)\big)\ ds\\
       &\leq \max_{x\in M}\varphi(x) + h_{\a}^t(x_0,x),
\end{split}
\end{equation*}which is bounded.
\end{itemize}
This completes the proof of the theorem.
\end{proof}

Denote $\phi^t = \Phi^t\circ \mathcal{L}^{-1}: TM\times \mathbb{R}\circlearrowleft$
 where $\Phi^t$ is the phase flow for \eqref{characteristic equ} and $\mathcal{L}$ is the Legendre transform. We have
\begin{lemma}\label{estimate before a priori compactness}
For any $\kappa>0$, there exists a $A>0$ such that if $x\in M, |u|\leq K, |\dot{x}|\geq A$, we have
\begin{equation*}
\|\dot{x}(t)\| \geq \kappa \qquad \forall ~t\in [-1, 1],
\end{equation*}where $(x(t), u(t), \dot{x}(t)) = \phi^t(x,u,\dot{x})$.
\end{lemma}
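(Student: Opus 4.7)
I would argue by contradiction, with the quasi-conservation of $H$ along characteristics as the main tool. Suppose the conclusion fails: there exist $\kappa_0>0$, a sequence $(x_n, u_n, \dot x_n) \in TM \times \mathbb{R}$ with $x_n \in M$, $|u_n| \leq K$, $\|\dot x_n\| \to \infty$, and times $t_n \in [-1, 1]$ such that $\|\dot x_n(t_n)\| < \kappa_0$, where $(x_n(t), u_n(t), \dot x_n(t)) = \phi^t(x_n, u_n, \dot x_n)$.

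The first step is to pass to the cotangent bundle: set $p_n = \partial L/\partial \dot x(x_n, u_n, \dot x_n)$. Uniform superlinearity of $L$ in $\dot x$ (Proposition \ref{property of the Lagrangian}(1)), combined with $|u_n| \leq K$ and the compactness of $M$, forces $\|p_n\| \to \infty$. Using (H4) to compare with the ``frozen'' Hamiltonian via $H(x, u, p) \geq H(x, 0, p) - \lambda|u|$, together with the uniform superlinearity of $p \mapsto H(x, 0, p)$ over $x \in M$ from (H2), we obtain $H_n := H(x_n, u_n, p_n) \to +\infty$.

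Next, I would invoke the quasi-conservation of $H$ along characteristics, obtained as in the commented completeness argument by direct computation from \eqref{characteristic equ}: $\frac{d}{dt} H(x(t), u(t), p(t)) = -H_u\, H$. Since $0 \leq H_u \leq \lambda$ by (H4) and (H5), integrating gives $H(x_n(s), u_n(s), p_n(s)) \geq H_n\, e^{-\lambda}$ for all $s \in [-1, 1]$, so in particular $H(x_n(t_n), u_n(t_n), p_n(t_n)) \to +\infty$. To derive the contradiction, note that $\|\dot x_n(t_n)\| < \kappa_0$ reads $\|H_p(x_n(t_n), u_n(t_n), p_n(t_n))\| < \kappa_0$; convexity of $H$ in $p$ gives $\|p\|\,\|H_p\| \geq p \cdot H_p \geq H(x, u, p) - H(x, u, 0)$, which combined with the previous divergence forces $\|p_n(t_n)\| \to \infty$; uniform superlinearity of $\|H_p\|$ in $\|p\|$ then re-injects a large lower bound on $\|H_p\|$, contradicting $\|H_p\| < \kappa_0$.

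\textbf{The main obstacle} lies in making the final ``reinjection'' step uniform in $u_n(t_n)$, which is not a priori bounded: from $\dot u = L$, $|u(t)|$ may grow when $\|\dot x\|$ is large on part of the trajectory, yet the Legendre duality and the uniform superlinearity on which the concluding step relies are only uniform over bounded $u$. To handle this, I would derive a direct ODE estimate on $\|p_n(t)\|$ from the momentum equation $\dot p = -H_x - H_u p$, exploiting $|H_u| \leq \lambda$ to obtain a Gronwall-type lower bound on $\|p_n(t)\|$ throughout $[-1,1]$ once $\|p_n\|$ is sufficiently large, together with a bootstrap argument showing that $|u_n(t)|$ remains in a range where the superlinear estimates on $H$ and $H_p$ apply uniformly on the time interval $[-1,1]$.
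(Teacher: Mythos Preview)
Your approach via quasi-conservation of $H$ along characteristics is a genuinely different route from the paper's, which is a soft compactness argument relying only on the completeness hypothesis (H3). The paper argues as follows: assuming the conclusion fails, extract a subsequence so that the data at the \emph{bad times} converge, namely $(s_{n_i}, x_{n_i}(s_{n_i}), u_{n_i}(s_{n_i}), \dot x_{n_i}(s_{n_i})) \to (s_0, x_0, u_0, v_0)$ in $[-1,1]\times TM\times\mathbb{R}$; this is possible because $M$ is compact, $\|\dot x_n(s_n)\|\leq\kappa_0$, and $|u_n(s_n)|\leq K$ (the paper obtains this last bound from Theorem~\ref{uniform bounds for semigroup}). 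Completeness and continuity of the flow then force $(x_{n_i}, u_{n_i}, \dot x_{n_i}) = \phi^{-s_{n_i}}(x_{n_i}(s_{n_i}), u_{n_i}(s_{n_i}), \dot x_{n_i}(s_{n_i})) \to \phi^{-s_0}(x_0, u_0, v_0)$, contradicting $\|\dot x_{n_i}\|\to\infty$. No energy, convexity, or Gronwall estimates enter.

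The obstacle you flag---controlling $u_n(t_n)$---is real, and it is precisely the step at which the paper invokes Theorem~\ref{uniform bounds for semigroup}: in the intended application the trajectories are calibrated curves, so $u_n(s)=T_s\varphi(\gamma_n(s))$ is uniformly bounded by that theorem; the paper does not resolve the issue for the abstract flow either. Your proposed workaround via a Gronwall lower bound on $\|p_n(t)\|$ from $\dot p = -H_x - H_u\, p$ is not convincing as stated: under (H1)--(H5) alone the term $H_x$ may grow with both $\|p\|$ and $|u|$, so the bootstrap on $|u_n(t)|$ and $\|p_n(t)\|$ cannot be closed without additional structural hypotheses on $H$. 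The cleaner path is to accept the bound $|u_n(t_n)|\leq K$ from the calibrated-curve context (as the paper does) and then use the completeness/continuity argument above, which replaces your Steps~2--5 by a one-line application of continuous dependence of $\phi^t$ on initial data.
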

\begin{proof}
We suppose by contradiction that there exist $\kappa_0$ such that for any $n\in \mathbb{Z}^+$,
if $x_n\in M, |u_n|\leq K, |v_n|\geq n$, there exists an $s_n\in [-1,1]$ such that $|v_n(s_n)|\leq \kappa_0$.
One can choose a subsequence $\{s_{n_i}\}$ of $\{s_{n}\}_{n\in\mathbb{Z}^+}$ such that
\begin{equation*}
s_{n_i}\rightarrow s_0\in [-1,1],~x_{n_i}(s_{n_i})\rightarrow x_0\in M,~u_{n_i}(s_{n_i})\rightarrow u_0, ~v_{n_i}(s_{n_i})\rightarrow v_0.
\end{equation*}
Due to Theorem \ref{uniform bounds for semigroup} and the assumption, we have $|u_0|\leq K$ and $|v_0|\leq \kappa_0$.
This contradicts with the assumption of $|v_{n_i}|
\geq n_i$ by the completeness of $\Phi^t$.
\end{proof}

We will show the following lemma of a priori compactness.
\begin{lemma}[A Priori Compactness]
There exists a $A>0$ such that for every calibrated curve $(\gamma(t), u(t), p(t))$, we have
\begin{equation*}
|\dot{\gamma}(t)| \leq A \qquad \text{when $t>2$}.
\end{equation*}
\end{lemma}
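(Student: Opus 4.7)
The plan is to argue by contradiction, combining three ingredients: the uniform sup-norm bound on $T_t\varphi$ from Theorem~\ref{uniform bounds for semigroup}; the persistence of large velocity under the characteristic flow from Lemma~\ref{estimate before a priori compactness}; and the fiberwise superlinear growth of $L$ from Proposition~\ref{property of the Lagrangian}(1) together with its Lipschitz dependence on $u$ from point (2) of that proposition.

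Suppose no such $A$ exists. Then I pick a sequence of calibrated curves $(\gamma_n,u_n,p_n)$ on intervals $[0,T_n]$ and times $t_n\in(2,T_n]$ with $\|\dot\gamma_n(t_n)\|\to\infty$. By Theorem~\ref{regularity result}, each calibrated curve satisfies the characteristic equation \eqref{characteristic equ}, so I may apply the flow $\phi^t$. Since $u_n(s)=T_s\varphi(\gamma_n(s))$, Theorem~\ref{uniform bounds for semigroup} gives the uniform bound $|u_n(s)|\leq K$ along the curve, which is exactly the hypothesis needed to invoke Lemma~\ref{estimate before a priori compactness}.

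Given any $\kappa>0$ (to be chosen later), Lemma~\ref{estimate before a priori compactness} supplies $A_\kappa>0$ such that once $\|\dot\gamma_n(t_n)\|\geq A_\kappa$ one has $\|\dot\gamma_n(s)\|\geq \kappa$ for all $s\in[t_n-1,t_n+1]$ intersected with $[0,T_n]$. Since $t_n>2$, the sub-interval $[t_n-1,t_n]$ is entirely contained in $[0,T_n]$; this is precisely where the hypothesis $t>2$ enters, ensuring enough room behind $t_n$ so that the whole unit interval on which the velocity is controlled lies inside the domain of calibration.

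To close the argument, I combine Proposition~\ref{property of the Lagrangian}, (H2), (H4), and the bound $|u_n|\leq K$: for every $M>0$ there exists a constant $B_M$ with $L(x,u_n(s),v)\geq M\|v\|-B_M$ along the calibrated curve. Integrating on $[t_n-1,t_n]$ and using the calibration identity
\begin{equation*}
u_n(t_n)-u_n(t_n-1)=\int_{t_n-1}^{t_n} L(\gamma_n(s),u_n(s),\dot\gamma_n(s))\,ds,
\end{equation*}
I obtain $M\kappa-B_M\leq |u_n(t_n)-u_n(t_n-1)|\leq 2K$. Fixing $M=1$ and letting $\kappa\to\infty$ (which is permitted, since $A_\kappa$ can be taken as large as one wishes) yields a contradiction. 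The only delicate bookkeeping is matching the unit interval on which the velocity stays large with the domain of calibration, and that is handled cleanly by the $t>2$ restriction, so I do not anticipate a serious obstacle beyond this choice of interval.
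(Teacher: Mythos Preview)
Your argument is correct and follows essentially the same route as the paper: argue by contradiction, use Theorem~\ref{uniform bounds for semigroup} to get $|u_n|\leq K$, apply Lemma~\ref{estimate before a priori compactness} to keep $\|\dot\gamma_n\|\geq\kappa$ on the unit interval $[t_n-1,t_n]$, and then combine superlinearity of $L$ with the calibration identity to force $|u_n(t_n)-u_n(t_n-1)|>2K$. The only cosmetic difference is that the paper fixes $\kappa$ once (so that $L(x,K,v)\geq 5K$ whenever $\|v\|>\kappa$) rather than phrasing the final estimate as $L\geq M\|v\|-B_M$ with $\kappa\to\infty$; the substance is identical.
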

\begin{proof}
Suppose by contradiction that for any $n$, there exists a $t_n>2$ and $x_n$ such that
the calibrated curve
\[
\big( \gamma_n(s), u_n(s), \dot{\gamma}_n(s)\big) \qquad s\in[0,t_n] \quad\text{ with } \gamma_n(t_n) =x_n
\]
satisfies $|\dot{\gamma}_n(t_n)|\geq n$.

Take $K$ in Theorem \ref{uniform bounds for semigroup} such that $|u(x,t)|\leq K$.
Due to the superlinear growth of $L(x, K, \dot{x})$,
there exists a $\kappa>0$ such that when $\dot{x}>\kappa$, we have
\[
L(x, K, \dot{x}) \geq 5K \qquad \forall ~x\in M.
\]

When $n$ large enough, applying Lemma \ref{estimate before a priori compactness}, we have
\[
|\dot{\gamma}_n(s)| \geq \kappa \qquad s\in [t_n-1, t_n].
\]

Consequently, $s>1$ and $L(\gamma_n(s), K, \dot{\gamma}_n(s)) \geq 5K$ for every $s\in [t_n-1, t_n]$.
Hence, we estimate
\begin{equation*}
\begin{split}
|u(x_n,t_n) - u(\gamma_n(t_n-1), t_n-1)|& = |u_n(t_n) - u_n(t_n -1)|\\
                          & = \left|\int_{t_n-1}^{t_n} L(\gamma_n(\tau), u_n(\tau), \dot{\gamma}_n(\tau))\ d\tau\right|\\
                          &\geq \left|\int_{t_n-1}^{t_n} L(\gamma_n(\tau), K, \dot{\gamma}_n(\tau))\ d\tau\right|\\
                          &\geq 5K,
\end{split}
\end{equation*}which is a contradiction with the fact that $|u(x,t)|\leq K$ when $t>1$. It ends the proof of the lemma.

\end{proof}

With above preliminary results and using a similar argument as in \cite{Fathi'08}, 
it is not difficult to show a variant of Fleming Lemma in our context.
\begin{theorem}\label{equiLipschitz}
For every $\varphi\in C^0(M,\mathbb{R})$, the family of functions $T_t \varphi$ with $t\geq 1$ is equi-Lipschitz.
\end{theorem}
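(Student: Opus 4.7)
The plan is to adapt the classical Fleming Lemma to the present setting: for two close points $x_0, x_1 \in M$ and a calibrated curve $\gamma$ ending at $x_0$ at time $t$, construct a comparison curve $\xi$ ending at $x_1$ by modifying $\gamma$ only on a short final interval, and then bound the resulting change in action. Since the modification leaves most of the curve intact, the action difference reduces to an integral over a tiny final segment, which is controlled by the a priori velocity bound and the uniform $L^\infty$-bound on $T_s\varphi$.

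Concretely, fix $\varphi\in C^0(M,\mathbb{R})$. Theorem \ref{uniform bounds for semigroup} gives $\|T_s\varphi\|_\infty \leq K$ for all $s \geq 0$, and the a priori compactness lemma gives a bound $\|\dot\gamma(s)\|_{\gamma(s)} \leq A$ along any calibrated curve at times bounded away from $0$. Let $t \geq 1$ and $x_0, x_1 \in M$ with $\delta := d(x_0,x_1)$ small enough that a minimizing geodesic between close points exists. Let $\gamma:[0,t]\to M$ be calibrated with $\gamma(t) = x_0$; it is $C^1$ by Theorem \ref{regularity result}. Set $\tau = \delta$ and define
\[
\xi(s) = \begin{cases} \gamma(s), & s \in [0, t-\tau], \\ \sigma(s), & s \in [t-\tau, t], \end{cases}
\]
where $\sigma$ is a constant-speed minimizing geodesic from $\gamma(t-\tau)$ to $x_1$ parametrized on $[t-\tau, t]$. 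From $\|\dot\gamma\| \leq A$ on $[t-\tau, t]$ we get $d(\gamma(t-\tau), x_0) \leq A\tau$, hence $d(\gamma(t-\tau), x_1) \leq (A+1)\tau$, so $\|\dot\sigma\| \leq A+1$.

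Because $\xi$ and $\gamma$ coincide on $[0, t-\tau]$, the variational inequality defining $T_t\varphi(x_1)$ combined with the calibrated identity for $T_t\varphi(x_0)$ yields
\[
T_t\varphi(x_1) - T_t\varphi(x_0) \leq \int_{t-\tau}^t \bigl[L(\xi(s), T_s\varphi(\xi(s)), \dot\xi(s)) - L(\gamma(s), T_s\varphi(\gamma(s)), \dot\gamma(s))\bigr]\, ds \leq 2C\tau,
\]
with $C = \sup\{|L(x,u,v)| : x \in M,\ |u| \leq K,\ \|v\|_x \leq A+1\}$ finite by continuity on a compact set. Swapping the roles of $x_0$ and $x_1$ gives the reverse bound, so $T_t\varphi$ is Lipschitz with constant $2C$ uniformly in $t \geq 1$. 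The main obstacle is the $u$-dependence of $L$: it entangles the action with the unknown $T_s\varphi$, so a naive Tonelli-type comparison is not available. What saves us is that both $\gamma$ and $\xi$ only ever see $T_s\varphi$-values in the uniformly bounded interval $[-K,K]$ by Theorem \ref{uniform bounds for semigroup}, which confines the integrand to a compact subset of $TM\times\mathbb{R}$ on the modified segment. A secondary minor issue is reconciling the threshold $t\geq 1$ with the $t>2$ appearing in the a priori compactness; this is handled either by a routine strengthening of that lemma (its proof extends to any $t \geq t_0 > 0$ with $A$ depending on $t_0$) or by the semigroup reduction $T_t = T_1 \circ T_{t-1}$ applied to the uniformly bounded family $\{T_{t-1}\varphi\}_{t\geq 1}$.
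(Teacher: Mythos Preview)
Your proposal is correct and is precisely the approach the paper indicates: the paper does not write out a proof but simply says that ``using a similar argument as in \cite{Fathi'08}, it is not difficult to show a variant of Fleming Lemma in our context,'' relying on the uniform bound (Theorem~\ref{uniform bounds for semigroup}) and the a priori compactness of calibrated curves. Your write-up supplies exactly this Fleming-type comparison, and your remarks on the $u$-dependence of $L$ and on the $t\geq 1$ versus $t>2$ threshold are the right clarifications to make the sketch complete.
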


\subsection{Convergence of the solution semigroup}
The goal of this section is to prove the following convergence theorem.
\begin{theorem}\label{convergence}
Suppose that $H$ is a $C^\infty$ function satisfying the hypotheses (H1)--(H6).
Let $T_t: C^0(M, \mathbb{R}) \rightarrow C^0(M,\mathbb{R})$ be the associated solution semigroup.

Then, for each $\varphi\in C^0(M,\mathbb{R})$, the limit of $T_t\varphi$, as $t\rightarrow +\infty$, exists.
Moreover, let us denote $u_\infty$ this limit and we obtain that $u_\infty$ satisfies \eqref{Dirichlet problem}.
\end{theorem}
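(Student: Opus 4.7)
The plan is to combine Arzel\`a--Ascoli precompactness with a careful analysis of the $\omega$-limit set of $\varphi$ under the semigroup $\{T_t\}_{t \geq 0}$, and then invoke the strict monotonicity encoded in the proper condition (H5) to force uniqueness of the limit. By Theorem \ref{uniform bounds for semigroup} and Theorem \ref{equiLipschitz}, the family $\{T_t\varphi\}_{t \geq 1}$ is uniformly bounded and equi-Lipschitz, hence precompact in $C^0(M,\mathbb{R})$. I would work with the pointwise upper and lower envelopes
\[
u^+(x) := \limsup_{t\to\infty} T_t\varphi(x), \qquad u^-(x) := \liminf_{t\to\infty} T_t\varphi(x),
\]
which inherit the uniform Lipschitz constant of the approximating family (since if all $T_t\varphi$ are $L$-Lipschitz so are their pointwise upper and lower limits). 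The inequality $u^- \leq u^+$ is automatic, so the real task is to prove the reverse inequality $u^+ \leq u^-$.

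As a preliminary reduction, for each fixed $s \geq 0$ set $F_s(t) := \sup_{x \in M}(T_{t+s}\varphi - T_t\varphi)(x)$ and $G_s(t) := \inf_{x \in M}(T_{t+s}\varphi - T_t\varphi)(x)$. The non-expansiveness part of Lemma \ref{monotonicity and nonexpanding}, applied to the identity $T_{t+s+r}\varphi - T_{t+r}\varphi = T_r(T_{t+s}\varphi) - T_r(T_t\varphi)$, shows that $F_s$ is non-increasing and $G_s$ is non-decreasing in $t$; hence the limits $F_s^\infty$ and $G_s^\infty$ exist and are finite. For any sequence $t_n \to \infty$ along which $T_{t_n}\varphi \to u$ uniformly, continuity of $T_s$ gives $T_{t_n + s}\varphi \to T_s u$ uniformly, so $\sup_x(T_s u - u)(x) = F_s^\infty$ and $\inf_x(T_s u - u)(x) = G_s^\infty$ for \emph{every} $u$ in the $\omega$-limit set of $\varphi$. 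Iterating $T_s$ on such a $u$ and using the uniform bound from Theorem \ref{uniform bounds for semigroup} then forces $G_s^\infty \leq 0 \leq F_s^\infty$.

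The crucial step is to upgrade these to the equalities $F_s^\infty = G_s^\infty = 0$, and here I would couple the proper condition (H5) with the equivalence between variational and viscosity solutions proven in Section \ref{weak KAM type framework}. Each shifted function $v_n(x,\sigma) := T_{t_n + \sigma}\varphi(x)$ is a viscosity solution of \eqref{Cauchy problem}, and the family $\{v_n\}$ is uniformly bounded and equi-Lipschitz on every slab $M \times [0,T]$. A Barles--Perthame half-relaxed limit argument then shows that, since shifting time does not change the equation and erases the $\sigma$-dependence in the limit, $u^+$ is a viscosity sub-solution and $u^-$ a viscosity super-solution of the stationary equation \eqref{Dirichlet problem}. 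Because $H$ is strictly increasing in $u$ by (H5), the standard comparison principle for proper Hamilton--Jacobi equations on the compact, boundaryless manifold $M$ applies and yields $u^+ \leq u^-$. Hence $u^+ = u^-$, and the common value $u_\infty$ is simultaneously a sub- and super-solution, i.e., a viscosity solution, of \eqref{Dirichlet problem}, which by the equivalence theorem is a weak KAM solution.

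Pointwise convergence $T_t\varphi \to u_\infty$ then holds by construction, and the uniform Lipschitz bound on the compact manifold $M$ upgrades it to uniform convergence. The main obstacle I anticipate is the comparison step: rigorously checking that $u^\pm$ are genuine viscosity sub/super-solutions of the stationary equation via half-relaxed limits, and invoking (or reproving) a comparison principle in the proper Hamilton--Jacobi setting, must be done carefully because the $u$-dependence of $H$ makes the standard Tonelli-flavored arguments less automatic. A purely variational alternative would be to analyze the action of $T_s$ on the compact $\omega$-limit set directly and to use strict monotonicity to conclude that every element is a fixed point of $T_s$; this avoids viscosity machinery but seems to require a more intricate combinatorial argument about iterated sups and infs along the orbit.
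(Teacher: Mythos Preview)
Your route is genuinely different from the paper's. The paper does not compare a subsolution and a supersolution of the stationary equation. Instead it first extracts a subsequential uniform limit $u_\infty$ via Arzel\`a--Ascoli, and then proves a dynamical energy estimate (Proposition~\ref{key ingredient for convergence}): along the calibrated curves the ODE $\tfrac{d}{ds}H=-\tfrac{\partial H}{\partial u}\,H$ forces the terminal energy to have nonpositive limit, the comparison being made against an auxiliary fixed point $\bar u_0$ built in Lemma~\ref{common fixed point existence}. This yields $H(x,u_\infty,d_xu_\infty)\le 0$ a.e., hence $u_\infty\le T_t u_\infty$; monotonicity makes $t\mapsto T_t u_\infty$ increasing, a second application of non-expansiveness along the differences $t_{n+1}-t_n$ shows $u_\infty$ is a common fixed point, and a final non-expansiveness estimate upgrades subsequential to full convergence. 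Your $F_s,G_s$ discussion and the $\omega$-limit set idea are in a similar spirit to this last part, but the paper never needs $u^+,u^-$ or half-relaxed limits.

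There is a real risk that your comparison step does not close under the hypotheses as written. You invoke the comparison principle for $H(x,u,Du)=0$ on the compact boundaryless $M$ using that $H$ is \emph{strictly} increasing in $u$, but (H5) says only ``increasing'' and (H4) permits $\lambda=0$; in particular the classical $u$-independent Tonelli case is within scope of the theorem (it then reduces to Fathi's convergence result via (H6)), and there the stationary comparison principle simply fails because constants can be added to solutions. The paper's argument avoids this entirely by never comparing two stationary sub/supersolutions---it uses the semigroup order instead. Secondly, your passage from the evolutionary to the stationary equation via half-relaxed limits is not quite off-the-shelf: the shifted $v_n$ solve $\partial_\sigma v+H=0$, not $H=0$, and the phrase ``erases the $\sigma$-dependence'' is precisely the point requiring proof. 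It can be made to work (the upper envelope over all $t\to\infty$ is shift-invariant, hence $\sigma$-independent, and testing with $\sigma$-independent $C^1$ functions kills the $\partial_\sigma$ term), but you should spell this out rather than cite Barles--Perthame directly.
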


Before proving the theorem, we first recall some crucial facts in the next.
Due to Theorem~\ref{solution semigroup generate weak KAM solution}
we know that $u(x,t) = T_t\varphi(x)$ is a variational solution of \eqref{Cauchy problem}.

Hence, by the definition of variational solutions,
there exists a $C^1$ curve $\gamma_t: [0, t]$ with $\gamma_t(t)=x$ such that
\begin{equation*}
\begin{split}
T_t\varphi(x) - T_s\varphi(\gamma_t(s)) &= \int_{s}^{t} L(\gamma_t(\tau),
T_\tau \varphi(\gamma_t(\tau)), \dot{\gamma_t}(\tau))d\tau \\
&=\inf_{\substack{\xi(s)=\gamma_t(s),\xi(t)=x\\
\xi\in C^{ac}([s,t],M)}}\int_{s}^{t} L(\xi(\tau),
T_\tau\varphi(\xi(\tau)), \dot{\xi}(\tau))d\tau \quad \forall~ 0\leq s<t.
\end{split}
\end{equation*}

We need to investigate the long time behavior of the energy $H$ on the calibrated curve $\gamma_t$ as $t\rightarrow +\infty$.
Note that due to Theorem \ref{regularity result}, for every $t>0$, the calibrated curve
\begin{equation}\label{c-curve}
 \big(\gamma_t(s),u_t(s)=T_s\varphi(\gamma_t(s)),p_t(s)=\frac{\partial
L}{\partial \dot{x}}(\gamma_t(s),u_t(s),\dot{\gamma_t}(s))\big)
\end{equation} is also a characteristic curve of \eqref{Cauchy problem}.

Along the characteristics, for every $s\in [0,t]$ we calculate:
\begin{equation}
\begin{split}
\frac{dH}{ds}(\gamma_t(s),u_t(s),p_t(s)) &= \frac{\partial H}{\partial x}\
\dot{\gamma_t}(s) + \frac{\partial H}{\partial u}\ \dot{u}_t(s) +
\frac{\partial H}{\partial p}\ \dot{p}_t(s)\\
& = -\frac{\partial H}{\partial u}(\gamma_t(s),u_t(s),p_t(s))\
H(\gamma_t(s),u_t(s),p_t(s)).
\end{split}
\end{equation}

Let $H_t(s) = H(\gamma_t(s),u_t(s),p_t(s))$ and we have


\begin{itemize}
\item [(1)] $H_t(s)$ is a decreasing function of $s$ if $H_t(0) >0$;
\item [(2)] $H_t(s)$ is an increasing function of $s$ if $H_t(0) < 0$;
\item [(3)] $H_t(s)=0$ if $H_t(0) = 0$.
\end{itemize}

We now give an energy estimate for some initial time $s_0\in [0,1]$ depending on $(x,t)\in M\times\mathbb{R}^+$.
As a corollary, we have uniform bounds for the energy $H_t(s)$ when $1\leq s \leq t$.
\begin{lemma}\label{initial energy estimate}
For every $x\in M$, $t\geq 1$, there exists an $s_0= s_0(x,t) \in[0,1]$ such that the calibrated curve given by \eqref{c-curve}
satisfies
\[
|H(\gamma_t(s_0),u_t(s_0),p_t(s_0))| \leq H_0.
\]
\end{lemma}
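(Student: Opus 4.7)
The plan is to use the calibration identity together with the uniform $L^\infty$ bound on $T_s\varphi$ (Theorem~\ref{uniform bounds for semigroup}) to force the Lagrangian to be bounded somewhere on $[0,1]$, and then to translate this into a momentum bound via the Legendre transform and finally into an energy bound.

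First, since $(\gamma_t,u_t,p_t)$ in \eqref{c-curve} is a characteristic curve, the third equation of \eqref{characteristic equ} combined with the Fenchel equality $\dot u_t = p_t\cdot\dot\gamma_t - H = L(\gamma_t,u_t,\dot\gamma_t)$ gives the calibration identity
\begin{equation*}
u_t(1)-u_t(0) \;=\; \int_0^1 L\bigl(\gamma_t(s),u_t(s),\dot\gamma_t(s)\bigr)\, ds.
\end{equation*}
By Theorem~\ref{uniform bounds for semigroup} we have $|u_t(s)|=|T_s\varphi(\gamma_t(s))|\leq K$ for all $s\geq 0$, so the right-hand side is bounded in absolute value by $2K$ independently of $(x,t)$. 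Since $\gamma_t,u_t,\dot\gamma_t$ are continuous on $[0,1]$ by Theorem~\ref{regularity result}, the mean value theorem for integrals produces some $s_0=s_0(x,t)\in[0,1]$ with
\begin{equation*}
L\bigl(\gamma_t(s_0),u_t(s_0),\dot\gamma_t(s_0)\bigr) \;=\; u_t(1)-u_t(0) \;\leq\; 2K.
\end{equation*}

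Next I turn this into a bound on $\dot\gamma_t(s_0)$. By point (2) of Proposition~\ref{property of the Lagrangian} we have $L(x,u,v)\geq L(x,0,v)-\lambda|u|$, and by point (1), $L(x,0,v)$ is superlinear in $v$ uniformly in $x\in M$ (using compactness of $M$). Combined with $|u_t(s_0)|\leq K$, the inequality $L(\gamma_t(s_0),u_t(s_0),\dot\gamma_t(s_0))\leq 2K$ forces $\|\dot\gamma_t(s_0)\|_{\gamma_t(s_0)}\leq A_0$ for some constant $A_0$ independent of $(x,t)$.

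Finally, because $p_t(s_0)=\tfrac{\partial L}{\partial\dot x}(\gamma_t(s_0),u_t(s_0),\dot\gamma_t(s_0))$ with $(\gamma_t(s_0),u_t(s_0),\dot\gamma_t(s_0))$ lying in the compact set $\{(x,u,v)\in TM\times\mathbb{R}:x\in M,|u|\leq K,\|v\|\leq A_0\}$, the continuity of $\tfrac{\partial L}{\partial\dot x}$ yields $\|p_t(s_0)\|\leq P_0$ uniformly. Then $H$ restricted to the compact set $\{(x,u,p):x\in M,|u|\leq K,\|p\|\leq P_0\}$ is bounded by some constant $H_0$, and we conclude
\begin{equation*}
\bigl|H(\gamma_t(s_0),u_t(s_0),p_t(s_0))\bigr| \;\leq\; H_0.
\end{equation*}
The main technical point to be careful about is ensuring that every constant ($K$, the lower bound of $L$, the superlinearity modulus, $A_0$, $P_0$, $H_0$) depends only on the data $H$ and $\varphi$, not on the particular pair $(x,t)$; this uniformity is granted by Theorem~\ref{uniform bounds for semigroup}, compactness of $M$, and the locally uniform smoothness of the Legendre transform.
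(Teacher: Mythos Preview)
Your proof is correct and follows essentially the same route as the paper's: bound $u_t$ on $[0,1]$, use the calibration identity together with superlinearity of $L$ to find some $s_0\in[0,1]$ where the velocity (equivalently, the momentum) is bounded, and then bound $H$ by continuity on the resulting compact set. The only minor difference is that the paper obtains the bound $|u_t(s)|\leq A$ on $[0,1]$ directly from the continuity of $(x,s)\mapsto T_s\varphi(x)$ on the compact set $M\times[0,1]$, rather than invoking Theorem~\ref{uniform bounds for semigroup}; your appeal to that theorem is harmless here but slightly heavier than necessary, and the paper's version avoids needing hypothesis (H6) at this step.
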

\begin{proof}
Moreover, assuming $t\geq 1$, we first observe by the continuity of $u$ that there exists an $A>0$ such that
\[
|u(x, s)| \leq A \qquad \forall~ s\in[0,1].
\]

Secondly, we claim that there exists a $B>0$ such that for every $t\geq 1$, there exists an $s_0\in [0,1]$ satisfying
\[
|p_t(s_0)| \leq B.
\]
In fact, we suppose by contradiction that for every $B>0$, there is a $t\geq 1$ such that
\[
|p_t(s)| \geq B \qquad \forall ~s\in [0,1].
\]
Hence, by the superlinear growth of $L(x, u,\dot{x})$ with respect to $\dot{x}$, we know
$L(\gamma_t(s), u_t(s), \dot{\gamma}_t(s))$ is unbounded for any $s\in [0,1]$.
This contradicts the fact that $u(x,s)$ is bounded, which shows the claim.

Consequently, let us take
\[
H_0 = \max_{\substack{x\in M\\ |u|\leq A\\|p|\leq B}} H(x,p,u)
\]which is independent of $x$ and $t$,
and then for every $t\geq 1$, there exists a point $s_0\in [0,1]$
of the calibrated curve $(\gamma_t(s),u_t(s),p_t(s))$ such that
\[
|H(\gamma_t(s_0),u_t(s_0),p_t(s_0))| \leq H_0.
\]
\end{proof}

Let us now show the following proposition which is a key ingredient in the proof of Theorem \ref{convergence}.
\begin{prop}\label{key ingredient for convergence}
If the limit energy of $H_{t_n}(t_n)$ exists as $t_n\rightarrow+\infty$, we have $\lim_{t_n\rightarrow +\infty}H_{t_n}(t_n)\leq 0$.
\end{prop}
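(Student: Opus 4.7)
\smallskip

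\noindent\textbf{Proof plan (sketch).} The natural strategy is proof by contradiction: assume $\alpha := \lim_{t_n\to+\infty} H_{t_n}(t_n) > 0$ and derive an inconsistency from the uniform bounds on $T_t\varphi$ together with the hypotheses (H5)--(H6).

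\smallskip

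\noindent\emph{Step 1: Propagation of positive energy.} Along any calibrated curve, the characteristic ODE implies
\[
\frac{d}{ds} H_t(s) \;=\; -\,\partial_u H\bigl(\gamma_t(s),u_t(s),p_t(s)\bigr)\,H_t(s),
\]
so, by (H5) ($\partial_u H\ge 0$), the function $s\mapsto H_t(s)$ preserves its sign and is non-increasing wherever it is positive. Combining this with Lemma~\ref{initial energy estimate} (which provides $s_0\in[0,1]$ with $|H_{t_n}(s_0)|\le H_0$), one concludes that for $n$ large
\[
\frac{\alpha}{2}\;\le\; H_{t_n}(t_n)\;\le\; H_{t_n}(s)\;\le\; H_0\qquad\forall\,s\in[s_0,t_n].
\]

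\smallskip

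\noindent\emph{Step 2: Linear growth forced by positivity.} Using the Fenchel equality $L_{t_n}(s)=p_{t_n}(s)\dot\gamma_{t_n}(s)-H_{t_n}(s)$ along the calibrated curve (Theorem~\ref{regularity result}) together with the variational identity $u_{t_n}(t_n)-u_{t_n}(s_0)=\int_{s_0}^{t_n}L_{t_n}(s)\,ds$, one rewrites
\[
u_{t_n}(t_n)-u_{t_n}(s_0)\;=\;\int_{s_0}^{t_n}p_{t_n}\dot\gamma_{t_n}\,ds\;-\;\int_{s_0}^{t_n}H_{t_n}(s)\,ds.
\]
By Theorem~\ref{uniform bounds for semigroup} the left-hand side is bounded by $2K$, while Step~1 forces $\int_{s_0}^{t_n}H_{t_n}(s)\,ds\ge(\alpha/2)(t_n-s_0)$. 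Hence $\int_{s_0}^{t_n}p_{t_n}\dot\gamma_{t_n}\,ds$ must grow at least linearly in~$t_n$.

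\smallskip

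\noindent\emph{Step 3: Conflict with the Ma\~n\'e critical value.} Hypothesis (H6) yields, for every $\epsilon>0$, a $C^{1,1}$ function $u_\epsilon$ with $\max_x H(x,c,du_\epsilon(x))<\epsilon$. The Fenchel inequality applied with $p=p_{t_n}(s)$ at the value $c$ gives
\[
p_{t_n}\dot\gamma_{t_n}\;\le\;L(\gamma_{t_n},c,\dot\gamma_{t_n})+H(\gamma_{t_n},c,p_{t_n}),
\]
while the Fenchel inequality with $p=du_\epsilon(\gamma_{t_n})$ at the value $c$ yields the lower bound $L(\gamma_{t_n},c,\dot\gamma_{t_n})\ge du_\epsilon(\gamma_{t_n})\dot\gamma_{t_n}-\epsilon$. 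Estimating $H(\gamma_{t_n},c,p_{t_n})$ by (H4)--(H5) in terms of $H_{t_n}$ and $|u_{t_n}-c|\le K+|c|$, and integrating, one converts the linear growth obtained in Step~2 into an inequality whose RHS is controlled by $u_\epsilon(\gamma_{t_n}(t_n))-u_\epsilon(\gamma_{t_n}(s_0))$ (bounded) plus a term that can be made arbitrarily small in slope by choosing $\epsilon$ small. This produces the required contradiction.

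\smallskip

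\noindent\emph{Main obstacle.} The delicate point is Step~3. The monotonicity (H5) yields \emph{opposite} sign inequalities for $H(\gamma_{t_n},c,p_{t_n})-H_{t_n}$ and for $L(\gamma_{t_n},c,\dot\gamma_{t_n})-L_{t_n}$ in the regions $\{u_{t_n}\ge c\}$ and $\{u_{t_n}<c\}$, so the time interval $[s_0,t_n]$ must be partitioned according to the sign of $u_{t_n}-c$ and each piece estimated separately. The a priori compactness of the calibrated curves (so that $p_{t_n}$, $\dot\gamma_{t_n}$ lie in a fixed compact set for $s\ge 2$) is essential to keep these bounds uniform, and the approximate subsolution from (H6) is the crucial ingredient that makes the slope of the upper bound strictly smaller than the forced growth rate $\alpha/2$.
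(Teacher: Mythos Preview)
Your Steps~1 and~2 are correct and match the paper's starting point: the energy decays along characteristics, so $H_{t_n}(s)\ge\alpha/2$ on $[s_0,t_n]$, and the calibration identity together with Theorem~\ref{uniform bounds for semigroup} forces $\int p_{t_n}\dot\gamma_{t_n}$ to grow at least like $(\alpha/2)t_n$.

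The gap is in Step~3. Your claim that the slope of the upper bound can be made ``arbitrarily small by choosing $\epsilon$ small'' does not follow from the ingredients you list. The parameter $\epsilon$ in (H6) controls only the quality of the approximate subsolution at the \emph{fixed} level $c$; it does not control the gap $|u_{t_n}(s)-c|$, which you yourself bound only by $K+|c|$. Consequently every comparison you make between the frozen Hamiltonian $H(\cdot,c,\cdot)$ and the true value $H_{t_n}$ (or between $L(\cdot,c,\cdot)$ and $L_{t_n}$) picks up an additive error of order $\lambda(K+|c|)$ per unit time, and this constant need not be smaller than $\alpha/2$. Your partitioning into $\{u_{t_n}>c\}$ and $\{u_{t_n}\le c\}$ does give the correct sign on one piece, but on the set $\{u_{t_n}>c\}$ the monotonicity (H5) yields $L_{t_n}\le L(\gamma_{t_n},c,\dot\gamma_{t_n})$, which is the wrong direction for bounding $\int L_{t_n}$ from below; the lower bound you can extract there via Fenchel and (H4) has slope $-\lambda(K-c)$, not anything tending to zero. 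So the contradiction does not close.

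The paper resolves this by \emph{not} comparing with the constant level $c$. Instead it first constructs a common fixed point $\bar u_0$ of $\{T_t\}$ (via $\bar u=\limsup_t T_t\varphi$ and $\bar u_0=\lim_t T_t\bar u$, using monotonicity and non-expansiveness) and observes that $T_t\varphi<\bar u_0+\epsilon$ for all large $t$. This is a \emph{one-sided} comparison with an \emph{arbitrarily small} gap $\epsilon$, which is exactly what your argument needs but cannot get from $c$ alone. With $\overline H(x,p):=H(x,\bar u_0(x),p)$ one then has $\overline H(\gamma_\infty,p_\infty)\ge a-\lambda\epsilon>a/2$, and since $\bar u_0$ is a genuine weak KAM solution the Ma\~n\'e critical value of $\overline H$ is exactly $0$; a Lagrangian-graph argument forces $\int\overline L(\gamma_\infty,\dot\gamma_\infty)\ge\Lambda s-B$, while the one-sided inequality gives $\int\overline L\le 2K+\lambda\epsilon\,s$. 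Choosing $\epsilon$ so that $\lambda\epsilon<\Lambda$ yields the contradiction. The construction of $\bar u_0$ is the missing idea in your sketch.
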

\begin{proof}
We first observe that from Lemma \ref{initial energy estimate},
one can choose a strictly increasing sequence $t_n\rightarrow +\infty$ as $n\rightarrow +\infty$ such that
\[
\lim_{n\rightarrow +\infty} H\big(\gamma_{n}(t_n),u_{n}(t_n),p_{n}(t_n)\big) = a ,
\]where $(\gamma_n, u_n):[0,t_n]\rightarrow M\times\mathbb{R}$ is a calibrated curve of \eqref{Cauchy problem} with $\gamma_n(t_n) =x$
and $u_n(s)=u(\gamma_n(s),s)$ is a variational solution of \eqref{Cauchy problem}.

We suppose by contradiction that $a>0$.

We notice that $\gamma_n$ is a characteristic curve by Theorem \ref{regularity result} and therefore $C^2$ by the characteristic equation \eqref{characteristic equ}.

In the next, we will use the ``diagonal sequence trick'' for the sequence $(\gamma_n,\dot{\gamma}_n)$.
For every $N\in \mathbb{N}$, by Ascoli-Arzela theorem, we can find a subsequence $(\gamma_{n_k},\dot{\gamma}_{n_k})$
such that
\[
(\gamma_{n_k}(s),\dot{\gamma}_{n_k}(s)) \rightarrow (\gamma_\infty^N(s), \dot{\gamma}_\infty^N(s)) \qquad \forall ~s\in [0, N].
\]
For typographical simplicity, we still denote $(\gamma_n,\dot{\gamma}_n)$ this subsequence $(\gamma_{n_k},\dot{\gamma}_{n_k})$.

Likewise, we can find a subsequence $(\gamma_{n_k},\dot{\gamma}_{n_k})$ such that
\[
(\gamma_{n_k}(s),\dot{\gamma}_{n_k}(s)) \rightarrow (\gamma_\infty^{N+1}(s), \dot{\gamma}_\infty^{N+1}(s)) \qquad \forall ~s\in [0, N+1].
\]
Note that $(\gamma_\infty^N(s), \dot{\gamma}_\infty^N(s)) =(\gamma_\infty^{N+1}(s), \dot{\gamma}_\infty^{N+1}(s))$ for every $s\in [0, N]$.

We continue this procedure and we finally obtain a curve $(\gamma_\infty(s), \dot{\gamma}_\infty(s))$ for $s\in [0,+\infty)$.
Using the continuity of $u(x,t)$ with respect to $x$, one can have a limit point $u_\infty$ of $u_n$ at the same time.
Therefore, the limit point $(\gamma_\infty, u_\infty, p_\infty)$ of $(\gamma_n, u_n, p_n)$ are obtained.

Hence, by our assumption, we have
\begin{equation}\label{energy assumption}
\lim_{s\rightarrow +\infty} H\big(\gamma_{\infty}(s),u_{\infty}(s),p_{\infty}(s)\big) = a>0 ,
\end{equation}
and $H\big(\gamma_{\infty}(s),u_{\infty}(s),p_{\infty}(s)\big)$ is a decreasing function of $s$. That is, for every $\eta>0$, there is an $S$ such that
\[
H\big(\gamma_{\infty}(s),u_{\infty}(s),p_{\infty}(s)\big)>a-\eta\qquad \forall ~s\geq S.
\]

For the proof of Proposition \ref{key ingredient for convergence},
we need to introduce a common fixed point of $T_t$ to control the energy of $(\gamma_\infty(s), u_\infty(s), p_\infty(s))$ as $s$ goes to $+\infty$.
\begin{lemma}\label{common fixed point existence}
For every $\varphi\in C^0(M,\mathbb{R})$, let us define
\begin{equation}
\bar{u} = \limsup_{t\rightarrow\infty} T_t\varphi.
\end{equation}
Moreover, the limit of $T_t \bar{u}$ exists for $t\rightarrow +\infty$
and this limit is a common fixed point of $T_t$.
\end{lemma}
\begin{proof}
To prove the Lemma \ref{common fixed point existence}, we will first
claim $T_t \bar{u}\leq \bar{u}$ holds for any $t\geq0$.
In fact, due to the definition of limsup, we have, for every
$\epsilon>0$, there exists an $S\in\mathbb{R}^+$ such that
\begin{equation}
T_s\varphi + \epsilon \geq \bar{u}\qquad \forall ~s\geq S.
\end{equation}
By the non-expansiveness and monotonicity of $T_t$, we get
\begin{equation}
T_t\circ T_s \varphi + \epsilon \geq T_t(T_s\varphi + \epsilon) \geq
T_t \bar{u}.
\end{equation}
Taking limsup of the above inequality as $s\rightarrow\infty$, we
obtain
\begin{equation}
\bar{u}+\epsilon \geq T_t \bar{u}.
\end{equation}
Since $\epsilon$ is arbitrary, we have $T_t \bar{u} \leq
\bar{u}$. Hence, by the monotonicity of $T_t$, it is easy to
see that $T_t \bar{u}$ is decreasing in $t$ and so has a
limit point as $t\rightarrow\infty$. We denote
\[\bar{u}_0 = \lim_{t\rightarrow\infty} T_t\bar{u}.\]

Since $T_t\varphi(x)$ is equi-Lipschitz with $t\geq 1$, by Theorem \ref{uniform
bounds for semigroup} and Ascoli-Arzela Theorem, we have
$\bar{u}_0\in C^0(M)$, which is a common fixed point of
$T_t$. This completes the proof of the lemma.
\end{proof}

Due to the property of $\bar{u}_0$, we know for every $\epsilon>0$ there exists a $T\in \mathbb{R}^+$ such that
\[
T_t\varphi < \bar{u}_0 + \epsilon \qquad \forall ~t\geq T.
\]

So, by the proper condition, we obtain for $s>T$ large enough that
\begin{equation}\label{energy comparison}
\begin{split}
H\big(\gamma_\infty(s), u_\infty(s), p_\infty(s)\big) &\leq H\big( \gamma_\infty(s), \bar{u}_0(\gamma_\infty(s)) + \epsilon, p_\infty(s) \big)\\
L\big(\gamma_\infty(s), u_\infty(s), \dot{\gamma}_\infty(s)\big) &\geq L\big( \gamma_\infty(s), \bar{u}_0(\gamma_\infty(s)) + \epsilon, \dot{\gamma}_\infty(s) \big).
\end{split}
\end{equation}

Consequently, choosing $\epsilon,\eta>0$ satisfying $\lambda \epsilon + \eta < \frac{a}{2}$,
by (H4), \eqref{energy comparison} and \eqref{energy assumption}, we get
\begin{equation}\label{energy comparison2}
\begin{split}
H\big( \gamma_\infty(s), \bar{u}_0(\gamma_\infty(s)), p_\infty(s) \big)&\geq H\big( \gamma_\infty(s), \bar{u}_0(\gamma_\infty(s)) + \epsilon, p_\infty(s) \big) - \lambda \epsilon\\
&\geq a-\eta - \lambda \epsilon> \frac{a}{2}
\end{split}
\end{equation}where $s\geq \max\{S, T\}$ is large enough.

On the other hand, using point (2) of Proposition \ref{property of the Lagrangian} and \eqref{energy comparison}, we have
\begin{equation}\label{action comparison}
\begin{split}
&\int_T^s L\big( \gamma_\infty(\tau), \bar{u}_0(\gamma_\infty(\tau)) , \dot{\gamma}_\infty(\tau) \big)\ d\tau \\
\leq &
\int_T^s L\big( \gamma_\infty(\tau), \bar{u}_0(\gamma_\infty(\tau)) + \epsilon, \dot{\gamma}_\infty(\tau) \big) + \lambda \epsilon\ d\tau \\
\leq & \int_T^s L\big(\gamma_\infty(\tau), u_\infty(\tau), \dot{\gamma}_\infty(\tau)\big)\ d\tau + \lambda\epsilon (s-T)\\
= & u(\gamma_\infty(s), s) - u(\gamma_\infty(T), T) + \lambda\epsilon (s-T)
\end{split}
\end{equation}where $s\geq \max\{S, T\}$ is large enough.
Note that the terms $u(\gamma_\infty(s), s)$ and $u(\gamma_\infty(T), T)$ are bounded by Theorem \ref{uniform bounds for semigroup}.

We now argue that \eqref{action comparison} contradicts \eqref{energy comparison2}
and therefore we obtain $a\leq 0$. This finishes the proof of the proposition.

All what remains to prove is the following lemma.
\begin{lemma}\label{energy comparison on Lagrangian graph}
Let $\overline{H}(x, p) = H(x, \bar{u}_0(x), p)$. For every $\delta> 0$, there exist $\Lambda>0, B>0$ such that
when $\gamma: [0,s] \rightarrow M $ satisfies $\overline{H}(\gamma(\tau), p(\tau)) > \delta$ we have
\[
\int_0^s \overline{L}(\gamma(\tau), \dot{\gamma}(\tau))  \ d\tau \geq \Lambda s - B
\]where $\overline{L}$ is the associated Lagrangian of $\overline{H}$.
\end{lemma}
\begin{proof}
From Lemma \ref{common fixed point existence}, we know that $\bar{u}_0(x)$ is variational solution of \eqref{Cauchy problem}, i.e.,
\[
H(x, \bar{u}_0(x), d_x \bar{u}_0(x) ) = 0.
\]
This means $0$ is a critical value of the Hamiltonian $\overline{H}$.

Let us denote by $\mathscr{D}$ the set of all the differentiable points of $\bar{u}_0$ in $M$.
Due to the Lipschitz property of $\bar{u}_0$, one can define
\begin{equation*}
\widetilde{L}(x,\dot{x}) = \left\{\!\!\!
  \begin{array}{rl}
   &\overline{L}(x, \dot{x}) - \langle d_x\bar{u}_0(x), \dot{x}\rangle  \qquad x\in \mathscr{D},\\
   &\inf \left\{ \liminf_{\mathscr{D}\ni x_n\rightarrow x} [\overline{L}(x, \dot{x}) - \langle d_x\bar{u}_0(x), \dot{x}\rangle]
    \right\} \qquad x\notin\mathscr{D}.
  \end{array}
\right.
\end{equation*}

Denote
$\Gamma =\left\{ \big(x, \mathcal{L}(d_x \bar{u}_0(x))\big)~:~x\in \mathscr{D}  \right\}$
where $\mathcal{L}$ is the Legendre transform associated with $\overline{L}$. Therefore, we obtain the following facts
\begin{itemize}
\item [(1)] $\widetilde{L}\big|_\Gamma = 0$;

\item [(2)] $\frac{\partial \widetilde{L}}{\partial \dot{x}}\big|_\Gamma
= \frac{\partial \overline{L}}{\partial \dot{x}}(x,\mathcal{L}(d_x \bar{u}_0(x))) - d_x \bar{u}_0(x) = 0$.
\end{itemize}

Take $x\in \pi_1 \Gamma\subseteq M$ where $\pi_1$ is the projection from $TM$ to $M$,
and then for every $\dot{x}\in T_x M$ we have from above facts (1), (2) and (H1)
\[
\widetilde{L} (x, \dot{x}) \geq M \|\dot{x} - \mathcal{L} (d_x \bar{u}_0(x))\|^2.
\]

Consequently, denoting $\overline{\Gamma}$ the closure of $\Gamma$ in $TM$, we have
\begin{equation}\label{Lagrangian graph}
\widetilde{L}(x, \dot{x})
\left\{\!\!\!
  \begin{array}{rl}
   &=0  \qquad (x,\dot{x})\in \Gamma,\\
   &>0  \qquad (x,\dot{x}) \notin \Gamma,
  \end{array}
\right.
\end{equation}
and
\begin{equation}
\overline{H}\big|_{\overline{\Gamma}} = 0.
\end{equation}

When $\overline{H}(\gamma(\tau), d_x \bar{u}_0(\gamma(\tau))) > \delta > 0$, one can find a $\Delta>0$ such that
$dist((\gamma(\tau), \dot{\gamma}(\tau)), \overline{\Gamma}) > \Delta$. Then, by \eqref{Lagrangian graph}, we can find a $\Lambda = \Lambda(\delta)> 0$ such that
$\widetilde{L} (\gamma(\tau), \dot{\gamma}(\tau)) > \Lambda.$

Thus, we can calculate
\begin{equation}
\begin{split}
\int_0^s \overline{L} (\gamma(\tau), \dot{\gamma}(\tau))  \ d\tau & \geq \Lambda s + \int_0^s \langle d_x \bar{u}_0(\gamma(\tau)), \dot{\gamma}(\tau) \rangle \ d\tau\\
&\geq \Lambda s - B,
\end{split}
\end{equation}where $B= 2\max \|\bar{u}_0\|$. This concludes the lemma.
\end{proof}

To check that \eqref{action comparison} contradicts \eqref{energy comparison2}, we just choose $\epsilon>0$ small enough such that $\lambda\epsilon< \Lambda$. So by Lemma \ref{energy comparison on Lagrangian graph}, we have
\[
\Lambda (s-T) \leq B + \lambda\epsilon(s-T) +  u(\gamma_\infty(s), s) - u(\gamma_\infty(T), T)
\]
which is a contradiction when $s-T$ is large enough.
\end{proof}

\begin{proof}[Proof of Theorem \ref{convergence}]
We now continue with the proof of Theorem \ref{convergence}.
Due to Theorem \ref{uniform bounds for semigroup} and Lemma \ref{equiLipschitz},
we know from Ascoli-Arzela theorem that
there exist a strictly increasing sequence $t_n\rightarrow +\infty$ and a Lipschitz function $u_\infty$
such that $T_{t_n} \varphi \rightarrow u_\infty$ uniformly.

From Proposition \ref{key ingredient for convergence}, we obtain that
\[
H(x, u_\infty(x), d_x u_\infty (x)) \leq 0
\] for almost all $x\in M$.

Let us denote $\overline{H}(x, p) = H(x, u_\infty (x), p)$.
For every continuous piecewise $C^1$ curve $\gamma:[t_1, t_2]\rightarrow M$ with $0\leq t_1 < t_2$,
we have
\begin{equation*}
\begin{split}
u_\infty(\gamma(t_2) ) - u_\infty(\gamma(t_1)) &\leq \int_{t_1}^{t_2} \overline{L} (\gamma(s), \dot{\gamma}(s)) \ ds\\
& = \int_{t_1}^{t_2} L (\gamma(s), u_\infty (\gamma(s)), \dot{\gamma}(s))\ ds,
\end{split}
\end{equation*}where $\overline{L}$ is the associated Lagrangian of $\overline{H}$.

Hence, $u_\infty \leq T_t u_\infty$ for each $t\geq 0$.
By the monotonicity of $T_t$, we know that $T_t u_\infty$ is increasing in $t$.

Let us denote $s_n = t_{n+1} - t_n$, which is a sequence goes to $+\infty$ as $n\rightarrow +\infty$. Therefore, we have
\begin{equation}
\begin{split}
\|T_{s_n}u_\infty - u_\infty\|_\infty &\leq \|T_{s_n}u_\infty - T_{s_n +t_n}\varphi\|_\infty+\| T_{t_{n+1}} \varphi - u_\infty\|_\infty\\
& \leq \|u_\infty -  T_{t_n}\varphi\|_\infty+\| T_{t_{n+1}} \varphi - u_\infty\|_\infty
\end{split}
\end{equation}by the non-expansiveness of $T_t$.
This shows that $\lim_{n\rightarrow+\infty}T_{s_n} u_\infty = u_\infty$,
which asserts that $u_\infty$ is common fixed point for $T_t$ with $t\geq 0$.

To show that the limit of $T_t\varphi$ exists as $t\rightarrow +\infty$, it then remains to prove $T_t \varphi \rightarrow u_\infty$ as $t\rightarrow+\infty$.

With this aim , we estimate that
\begin{equation*}
\|T_t \varphi - u_\infty\|_\infty = \|T_{t-t_n} \circ T_{t_n} \varphi - T_{t-t_n}u_\infty\|_\infty \leq \|T_{t_n} \varphi - u_\infty\|_\infty,
\end{equation*} when $t>t_n$.
This finishes the proof of the theorem since $T_{t_n}\varphi \rightarrow u_\infty$.
\end{proof}


\section*{Appendix}
\begin{proof}[Proof of Theorem \ref{compact lemma}]Let us start by fixing some $t>0$, some $K\in \mathbb{R}$ and some
$u\in C^0(M\times \mathbb{R},\mathbb{R})$.

Since $M\times [0,t]$ is compact in $M\times\mathbb{R}$, the set
$u(M\times[0,t])$ is compact in $\mathbb{R}$. Consequently, there
exists $K_t>0$ such that $|u(x,s)|\leq K_t$ for every $x\in M, ~s\in
[0,t]$.

\textbf{First step:} The set $C^{ac}_K$ is absolutely equicontinuous, i.e.,
for every $\epsilon>0$, there exists $\delta>0$ such that if $0\leq
a_1<b_1\leq a_2\leq b_2\leq \ldots \leq a_n\leq b_n\leq t$ and
$\sum_{i=0}^n b_i-a_i <\delta$, then
\[
\sum_{i=0}^n dist(\gamma(a_i),\gamma(b_i)) <\epsilon \qquad \forall~
\gamma\in C^{ac}_K.
\]

Due to the Lipschitz continuity of $L$ with respect to $u$ and the
fiberwise superlinear growth of $L$, for each $R\geq0$, we can find
$C_R>-\infty$ such that for every $\dot{x}\in T_x M$, we have
\[L(x,u,\dot{x})\geq L(x,0,\dot{x}) - \lambda |u| \geq  R \|\dot{x}\| + C_R - \lambda |u|.\]

Consequently, for every $\epsilon>0$, let us take $R>2\frac{K +
t\lambda K_t -t C_0}{\epsilon}, ~ \delta =
\frac{R\epsilon}{2(C_0-C_R)}$. Suppose we have a finite sequence of
pairwise disjoint sub-intervals $(a_i,b_i)$ of $[0,t]$ as above
satisfies
\[
\sum_{i=1}^n (b_i - a_i )<\delta.
\]
Let $J=\cup_{i=1}^n [a_i,b_i]$. It follows that
\begin{equation*}
\begin{split}
&\quad\sum_{i=1}^n dist\big(\gamma(a_i),\gamma(b_i)\big) \\&\leq
\sum_{i=1}^n
\int_{a_i}^{b_i} \|\dot{\gamma}(s)\|\ ds = \int_J\|\dot{\gamma}(s)\|\ ds \\
&\leq \frac{1}{R} \int_J
[L(\gamma(s),u(\gamma(s),s),\dot{\gamma}(s)) - C_R + \lambda |u(\gamma(s),s)|]\ ds \\
&\leq \frac{A_u(\gamma)}{R} - \int_{[0,t]\setminus J} \frac{L(\gamma(s),u(\gamma(s),s),\dot{\gamma}(s))}{R}\ ds + \frac{\lambda K_t - C_R}{R}\sum_{i=1}^n (b_i - a_i)\\
&\leq \frac{K}{R} + \frac{\lambda K_t - C_0}{R}[t-\sum_{i=1}^n(b_i-a_i)] + \frac{\lambda K_t - C_R}{R}\sum_{i=1}^n (b_i - a_i) \\
&\leq \frac{K+t\lambda K_t - tC_0}{R} + \frac{C_0 - C_R}{R} \sum_{i=1}^n (b_i - a_i) \\
&<\frac{\epsilon}{2} + \frac{\epsilon}{2}=\epsilon,
\end{split}
\end{equation*}
which shows this step.

Hence, for every sequence $\{\gamma_i\}_{i\in\mathbb{N}} \subseteq
C^{ac}_K$, by the compactness of $M$ and Ascoli-Arzela theorem, we
can find a subsequence (still denoted by $\gamma_i$) which converges
to some absolutely continuous curve $\gamma$.

\textbf{Second step:} To complete the proof, we will consist in
showing that $A_u(\gamma)\leq K$. In fact, it suffices to prove that
$A_u$ is lower semicontinuous with respect to the $C^0$-topology. In
the sequel, we will reduce the proof to the case where $M$ is an
open subset of $\mathbb{R}^{k}$ where $k=\text{dim} M$.

Since $\gamma_i$ converges uniformly to $\gamma$, we know that the
set
\[
\mathcal{K} = \gamma([0,t])\cup\cup_{i\in \mathbb{N}}\gamma_i([0,t])
\]
is compact. Using the continuity of $u$, we know
$u(\mathcal{K}\times [0,t])$ is a compact subset of $\mathbb{R}$.

Therefore, by the Lipschitz continuity of $L$ with respect to $u$
and the fiberwise superlinearity of $L$, we can find a constant
$\overline K = C_0 - \lambda K_t$ such that
\[
L(x,u,\dot x)\geq \overline K  \qquad \forall~x\in\mathcal{K},~\dot
x\in T_x M.
\]

If $[a,b]\subseteq [0,t]$, taking $K_0$ as a lower bound of
$L(\gamma_i(\tau),u(\gamma_i(\tau),\tau),\dot \gamma_i(\tau))$ on
$[0,t]\setminus [a,b]$, we have
\[
A_u(\gamma_i|_{[a,b]}) \leq A_u(\gamma_i) - \overline{K} (t-b+a)
\qquad \forall ~i\in\mathbb{N}.
\]
It follows that
\[
\liminf_{i\rightarrow +\infty} A_u(\gamma_i|_{[a,b]}) <+\infty
\qquad \forall ~[a,b]\subseteq [0,t].
\]

By continuity of $\gamma$, we can find a finite sequence $t_1=0<
t_2<\ldots<t_p=t$ and a sequence of coordinate charts $U_1,\ldots,
U_p$ such that \[\gamma([t_{n},t_{n+1}])\subseteq U_n\qquad
n=1,\ldots,p-1.\]

Since $\gamma_i$ converges uniformly to $\gamma$, there exists an
$N_0$ such that when $i\geq N_0$ we have
\[\gamma_i([t_n,t_{n+1}])\subseteq U_n \qquad\text{for } n=1,\ldots
,p.\]

Therefore, it is enough to show in local coordinate charts that
\[
A_u(\gamma|_{[t_n,t_{n+1}]}) \leq \liminf_{i\rightarrow +\infty}
A_u(\gamma_i|_{[t_n,t_{n+1}]}).
\]
because we have $\liminf_{i\rightarrow+\infty}(\alpha_i
+\beta_i)\geq  \liminf_{i\rightarrow+\infty}\alpha_i
+\liminf_{i\rightarrow+\infty}\beta_i$ for sequences of real numbers
$\alpha_i$ and $\beta_i$.

Hence, we do need to prove the lower semi-continuity of $A_u$ in the
case where $M$ is an open subset of $\mathbb{R}^k$.

\textbf{Third step:} We now consider the differentiable point $s\in
(0,t)$ of $\gamma$. It is easy to have the following local estimate
in the case $M = U\subseteq \mathbb{R}^k$:
\begin{lemma}
For any $\epsilon>0$, we have
\begin{equation}\label{local estimate}
\begin{split}
L(x,u(x,\tau), \dot{x}) &\geq
L\big(\gamma(s),u(\gamma(s),s),\dot{\gamma}(s)\big) \\
&\quad + \frac{\partial L}{\partial
\dot{x}}\big(\gamma(s),u(\gamma(s),s),\dot{\gamma}(s)\big)\ (\dot{x}
- \dot{\gamma}(s)) - \epsilon,
\end{split}
\end{equation} provided $(x,\tau)$ and $(\gamma(s),s)$ are
close enough.
\end{lemma}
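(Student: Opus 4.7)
The plan is to derive the inequality from Fenchel–Legendre duality rather than from a direct Taylor expansion of $L$, since this avoids having to control products of the form $[\partial_{\dot x} L(x,u(x,\tau),\dot\gamma(s))-\partial_{\dot x} L(\gamma(s),u(\gamma(s),s),\dot\gamma(s))]\cdot(\dot x-\dot\gamma(s))$ in which $\dot x$ can be arbitrarily large. All the work is done in the local coordinate chart already fixed in the second step of the proof, where $M$ is identified with an open subset of $\mathbb{R}^k$ and the derivative $\dot\gamma(s)$ exists.

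Set the fixed momentum
\[
p_0 \;=\; \frac{\partial L}{\partial \dot{x}}\big(\gamma(s),u(\gamma(s),s),\dot\gamma(s)\big).
\]
By Proposition~\ref{property of the Lagrangian}(1), for every $u\in\mathbb{R}$ the function $L(x,u,\cdot)$ is strictly convex and superlinear on the fibres, so the Legendre transform between $L$ and $H$ is well-defined. The Fenchel inequality applied at $(x,u(x,\tau))$ with momentum $p_0$ gives
\[
L(x,u(x,\tau),\dot{x}) \;\geq\; \langle p_0,\dot{x}\rangle \;-\; H(x,u(x,\tau),p_0),
\]
while the Fenchel equality at $(\gamma(s),u(\gamma(s),s),\dot\gamma(s))$ (which is exactly the defining relation for $p_0$) reads
\[
L\big(\gamma(s),u(\gamma(s),s),\dot\gamma(s)\big) \;=\; \langle p_0,\dot\gamma(s)\rangle \;-\; H\big(\gamma(s),u(\gamma(s),s),p_0\big).
\]
Subtracting the second from the first and rewriting $\langle p_0,\dot x\rangle=\langle p_0,\dot\gamma(s)\rangle+\langle p_0,\dot x-\dot\gamma(s)\rangle$ yields
\[
L(x,u(x,\tau),\dot x) \;\geq\; L(\gamma(s),u(\gamma(s),s),\dot\gamma(s)) + \langle p_0,\dot x-\dot\gamma(s)\rangle + \Delta H,
\]
where $\Delta H = H(\gamma(s),u(\gamma(s),s),p_0)-H(x,u(x,\tau),p_0)$.

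To finish, I would invoke continuity: $u$ is continuous on $M\times[0,t]$, hence $u(x,\tau)\to u(\gamma(s),s)$ as $(x,\tau)\to(\gamma(s),s)$, and $H$ is $C^\infty$, so for the fixed vector $p_0$ the map $(x,u)\mapsto H(x,u,p_0)$ is continuous. Therefore $|\Delta H|<\epsilon$ whenever $(x,\tau)$ lies in a sufficiently small neighbourhood of $(\gamma(s),s)$, and the identification $p_0=\partial_{\dot x}L(\gamma(s),u(\gamma(s),s),\dot\gamma(s))$ turns the displayed inequality into exactly \eqref{local estimate}.

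The genuinely delicate point in this argument is conceptual rather than computational: one must resist the temptation to apply a convexity inequality at the moving base point $(x,u(x,\tau))$, because that choice produces a subgradient $\partial_{\dot x}L(x,u(x,\tau),\dot\gamma(s))$ that differs from the target $p_0$ by something small, but which is then multiplied by the potentially large quantity $\dot x-\dot\gamma(s)$ coming from the minimizing sequence $\gamma_i$. Anchoring at the fixed momentum $p_0$ throughout, and using Fenchel inequality in the form that is linear in $\dot x$, is the only way to keep the error purely of the form $\Delta H$ and thus controlled only by the continuity of $H$ and $u$ in the base variables.
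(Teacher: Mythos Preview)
Your argument is correct and is in fact cleaner than the paper's. The paper proceeds by a case split on the size of $\|\dot x\|$: for large $\|\dot x\|$ it invokes superlinearity of $L$ (choosing the slope $R+1$ with $R=\|p_0\|$) to beat the affine function $L(\gamma(s),u(\gamma(s),s),\dot\gamma(s))+p_0\cdot(\dot x-\dot\gamma(s))$ outright, with no $\epsilon$ needed; for $\|\dot x\|$ in the remaining compact range it uses fibrewise convexity at the fixed base point $(\gamma(s),u(\gamma(s),s))$ to get the inequality exactly there, and then uniform continuity of $L$ on the compact set $\overline V_\eta\times\{\|\dot x\|\le\text{const}\}$ to absorb the perturbation of $(x,u(x,\tau))$ into the $-\epsilon$. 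Your Fenchel-duality approach sidesteps the case split entirely: since the Fenchel inequality $L(x,u,\dot x)\ge\langle p_0,\dot x\rangle-H(x,u,p_0)$ is linear in $\dot x$ with the \emph{fixed} slope $p_0$, the only error term is $\Delta H=H(\gamma(s),u(\gamma(s),s),p_0)-H(x,u(x,\tau),p_0)$, which is independent of $\dot x$ and tends to zero by continuity of $H$ and $u$ alone. This is exactly the uniformity-in-$\dot x$ that the paper achieves via the superlinearity detour; your observation that anchoring the momentum at $p_0$ (rather than at $\partial_{\dot x}L(x,u(x,\tau),\dot\gamma(s))$) is what makes the error purely a base-variable term is the right diagnosis, and it yields a shorter proof.
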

\begin{proof}
Using the continuity of $u$, let us choose $\eta_0>0$ such that
\[
\overline{V}_{\eta_0} = \{ (x,\tau)\in \mathbb{R}^{k+1} : \|x -
\gamma(s)\| + |\tau-s|\leq \eta_0 \}
\] is a compact subset of $U$.

Since $L$ is Lipschitz continuous with respect to $u$ and fiberwise
superlinear , let $R = \|\frac{\partial L}{\partial
\dot{x}}(\gamma(s),u(\gamma(s),s),\dot{\gamma}(s))\|$, we can find
$C_{R+1}>-\infty$ such that for every $(x,\tau)\in
\overline{V}_{\eta_0}$ and $\dot{x}\in T_x M$, we have
\[L(x,u(x,\tau),\dot{x})\geq L(x,0,\dot{x}) - \lambda |u| \geq  (R+1 )\|\dot{x}\| + C_{R+1} - \lambda K_t.\]

Let \[ \widetilde{K} = L(\gamma(s),u(\gamma(s),s),\dot{\gamma}(s)) -
\frac{\partial L}{\partial
\dot{x}}(\gamma(s),u(\gamma(s),s),\dot{\gamma}(s))(\dot{\gamma}(s)).\]

For $\|\dot{x}\|\geq \widetilde{K} - C_{R+1} + \lambda K_t$, for
$(x,\tau)\in \overline{V}_{\eta_0}$, we obtain
\begin{equation*}
\begin{split}
L(x,u(x,\tau),\dot{x}) &\geq (R+1 )\|\dot{x}\| + C_{R+1} - \lambda K_t\\
&\geq R \|\dot{x}\| + \widetilde{K}\\
&\geq \frac{\partial L}{\partial
\dot{x}}(\gamma(s),u(\gamma(s),s),\dot{\gamma}(s))(\dot{x}) + \widetilde{K}\\
&\geq L\big(\gamma(s),u(\gamma(s),s),\dot{\gamma}(s)\big) \\
&\quad + \frac{\partial L}{\partial
\dot{x}}\big(\gamma(s),u(\gamma(s),s),\dot{\gamma}(s)\big)\ (\dot{x}
- \dot{\gamma}(s)).
\end{split}
\end{equation*}

For $\|\dot{x}\|\leq \widetilde{K} - C_{R+1} + \lambda K_t$, we note
that for $(x,\tau)=(\gamma(s),s)$, we have
\[
L(\gamma(s),u(\gamma(s),s), \dot{x}) \geq
L\big(\gamma(s),u(\gamma(s),s),\dot{\gamma}(s)\big)  +
\frac{\partial L}{\partial
\dot{x}}\big(\gamma(s),u(\gamma(s),s),\dot{\gamma}(s)\big)\ (\dot{x}
- \dot{\gamma}(s))
\]
which follows immediately from the fiberwise convexity of $L$.

Consequently, for $\epsilon>0$, we can find $0<\eta\leq \eta_0$ such
that for every $(x,\tau)\in \overline{V}_\eta$ we have
\[
L(x,u(x,\tau), \dot{x}) \geq
L\big(\gamma(s),u(\gamma(s),s),\dot{\gamma}(s)\big)  +
\frac{\partial L}{\partial
\dot{x}}\big(\gamma(s),u(\gamma(s),s),\dot{\gamma}(s)\big)\ (\dot{x}
- \dot{\gamma}(s)) - \epsilon.
\]
\end{proof}

\textbf{Fourth step:} To apply a standard argument, for every
$C\in\mathbb{R}$, we define the function
\begin{equation}
w_C(s) = \min\left\{~
L\big(\gamma(s),u(\gamma(s),s),\dot{\gamma}(s)\big), C ~\right\}.
\end{equation}
Since $u$ is continuous and $L$ is bounded below, $w_C$ is
integrable of $s$. Therefore, its indefinite integral
$W_C(s)=\int_0^s w_C(\tau)\ d\tau$ is absolutely continuous on
$[0,t]$. We denote
\begin{equation*}
E_C \equiv \left\{~s\in[0,t]:~ \gamma \text{ and } W_C \text{ are
differentiable at } s,~ w_C(s) = \frac{dW_C(s)}{ds}~\right\}
\end{equation*} which has full Lebesgue measure in $[0,t]$.

We apply \eqref{local estimate} with $x=\gamma_i(\tau),\dot{x} =
\dot{\gamma}_i(\tau)$ and we compute
\begin{equation}
\begin{split}
&\liminf_{\delta_1,\delta_2\downarrow0}
\liminf_{n\rightarrow\infty}\frac{1}{\delta_1+\delta_2}\int_{s-\delta_1}^{s+\delta_2}
L\big(\gamma_i(\tau),u(\gamma(\tau),\tau),\dot{\gamma}_i(\tau)\big)\\
\geq& L\big(\gamma(s),u(\gamma(s),s),\dot{\gamma}(s)\big) -\epsilon\\
& +
\liminf_{\delta_1,\delta_2\downarrow0}\liminf_{n\rightarrow\infty}\frac{1}{\delta_1+\delta_2}\int_{s-\delta_1}^{s+\delta_2}
\frac{\partial L}{\partial
\dot{x}}\big(\gamma(s),u(\gamma(s),s),\dot{\gamma}(s)\big)\
(\dot{\gamma}_i(\tau) - \dot{\gamma}(s))\\
=& L\big(\gamma(s),u(\gamma(s),s),\dot{\gamma}(s)\big) -\epsilon
\end{split}
\end{equation}The last equality holds since $\gamma_i$ $C^0$
converges to $\gamma$.

In particular, take $s\in E_C$. This inequality implies for every
$\epsilon>0$, there exists a $\delta_0>0$ such that if
$0<\delta_1,\delta_2\leq \delta_0$, we have
\begin{equation}\label{local estimate2}
\begin{split}
\liminf_{i\rightarrow \infty} \frac{1}{\delta_1+\delta_2}
A_u\big(\gamma_i|_{[s-\delta_1,s+\delta_2}\big) &\geq L(\gamma(s),
u(\gamma(s),s), \dot{\gamma}(s)) - \frac{\epsilon}{2}\\
&\geq \frac{W_C(s+\delta_2) - W_C(s-\delta_1)}{\delta_1+\delta_2} -
\epsilon
\end{split}
\end{equation}

The last step is to extend the local estimate \eqref{local
estimate2} to the global one. In fact, it is not difficult to
construct a countable mutually disjoint sequence
$\{[a_i,b_i]\}_{i\in\mathbb{N}}$ of closed intervals which cover
$E_C$ such that
\begin{equation}
\liminf_{i\rightarrow \infty} \frac{A_u(\gamma_i|_{[a_j,b_j]})}{b_j
- a_j} \geq \frac{W_C(b_j) - W_C(a_j)}{b_j -  a_j} - \epsilon.
\end{equation}
It follows that
\begin{equation}
\liminf_{i\rightarrow \infty} A_u(\gamma_i) \geq W_C(t) -  W_C(0) -
\epsilon t.
\end{equation}
Let $C\uparrow\infty$ and since $\epsilon$ is arbitrary we obtain
\begin{equation}
\liminf_{i\rightarrow \infty} A_u(\gamma_i) \geq A_u(\gamma),
\end{equation}which finishes the proof of the theorem.

\end{proof}


\bibliographystyle{alpha}
\bibliography{weak-KAM}

\def\cprime{$'$}
\begin{thebibliography}{Fat97b}

\bibitem[Arn92]{Arnold'92}
Vladimir~I. Arnol{\cprime}d.
\newblock {\em Ordinary differential equations}.
\newblock Springer Textbook. Springer-Verlag, Berlin, 1992.
\newblock Translated from the third Russian edition by Roger Cooke.

\bibitem[Bar13]{Barles'13}
Guy Barles.
\newblock {An introduction to the theory of viscosity solutions for first-order
  Hamilton-Jacobi equations and applications.}
\newblock {Loreti, Paola (ed.) et al., Hamilton-Jacobi equations:
  approximations, numerical analysis and applications. Based on the lectures of
  the CIME summer school, Cetraro, Italy, August 29--September 3, 2011. Berlin:
  Springer; Firenze: Fondazione CIME. Lecture Notes in Mathematics 2074. CIME
  Foundation Subseries, 49-109 (2013).}, 2013.

\bibitem[Ben77]{Benton'77}
Stanley~H. Benton, Jr.
\newblock {\em The {H}amilton-{J}acobi equation}.
\newblock Academic Press [Harcourt Brace Jovanovich Publishers], New York,
  1977.
\newblock A global approach, Mathematics in Science and Engineering, Vol. 131.

\bibitem[BM85]{Ball'85}
J.~M. Ball and V.~J. Mizel.
\newblock One-dimensional variational problems whose minimizers do not satisfy
  the {E}uler-{L}agrange equation.
\newblock {\em Arch. Rational Mech. Anal.}, 90(4):325--388, 1985.

\bibitem[CEL84]{Lions'84}
M.~G. Crandall, L.~C. Evans, and P.-L. Lions.
\newblock Some properties of viscosity solutions of {H}amilton-{J}acobi
  equations.
\newblock {\em Trans. Amer. Math. Soc.}, 282(2):487--502, 1984.

\bibitem[CIL92]{Lions'92}
Michael~G. Crandall, Hitoshi Ishii, and Pierre-Louis Lions.
\newblock User's guide to viscosity solutions of second order partial
  differential equations.
\newblock {\em Bull. Amer. Math. Soc. (N.S.)}, 27(1):1--67, 1992.

\bibitem[CISM]{Contreras'preprint}
G.~Contreras, R.~Iturriaga, and H.~S{\'a}nchez~Morgado.
\newblock Weak solutions of the {H}amilton-{J}acobi equation for time periodic
  {L}agragians.
\newblock Preprint.

\bibitem[CL81]{Lions'81}
Michael~G. Crandall and Pierre-Louis Lions.
\newblock Condition d'unicit\'e pour les solutions g\'en\'eralis\'ees des
  \'equations de {H}amilton-{J}acobi du premier ordre.
\newblock {\em C. R. Acad. Sci. Paris S\'er. I Math.}, 292(3):183--186, 1981.

\bibitem[CL83]{Lions'83}
Michael~G. Crandall and Pierre-Louis Lions.
\newblock Viscosity solutions of {H}amilton-{J}acobi equations.
\newblock {\em Trans. Amer. Math. Soc.}, 277(1):1--42, 1983.

\bibitem[Dou65]{Douglis'65}
Avron Douglis.
\newblock Solutions in the large for multi-dimensional, non-linear partial
  differential equations of first order.
\newblock {\em Ann. Inst. Fourier (Grenoble)}, 15(fasc. 2):1--35, 1965.

\bibitem[E99]{E'99}
Weinan E.
\newblock Aubry-{M}ather theory and periodic solutions of the forced {B}urgers
  equation.
\newblock {\em Comm. Pure Appl. Math.}, 52(7):811--828, 1999.

\bibitem[Eva04]{Evans'04}
Lawrence~C. Evans.
\newblock A survey of partial differential equations methods in weak {KAM}
  theory.
\newblock {\em Comm. Pure Appl. Math.}, 57(4):445--480, 2004.

\bibitem[Fat97a]{Fathi'97b}
Albert Fathi.
\newblock Solutions {KAM} faibles conjugu\'ees et barri\`eres de {P}eierls.
\newblock {\em C. R. Acad. Sci. Paris S\'er. I Math.}, 325(6):649--652, 1997.

\bibitem[Fat97b]{Fathi'97a}
Albert Fathi.
\newblock Th\'eor\`eme {KAM} faible et th\'eorie de {M}ather sur les syst\`emes
  lagrangiens.
\newblock {\em C. R. Acad. Sci. Paris S\'er. I Math.}, 324(9):1043--1046, 1997.

\bibitem[Fat08]{Fathi'08}
Albert Fathi.
\newblock {\em Weak KAM Theorem in Lagrangian Dynamics}.
\newblock 2008.

\bibitem[Hop50]{Hopf'50}
Eberhard Hopf.
\newblock The partial differential equation {$u_t+uu_x=\mu u_{xx}$}.
\newblock {\em Comm. Pure Appl. Math.}, 3:201--230, 1950.

\bibitem[Kal05]{Kaloshin'05}
Vadim~Yu. Kaloshin.
\newblock Mather theory, weak {KAM} theory, and viscosity solutions of
  {H}amilton-{J}acobi {PDE}'s.
\newblock In {\em E{QUADIFF} 2003}, pages 39--48. World Sci. Publ., Hackensack,
  NJ, 2005.

\bibitem[Lax57]{Lax'57}
P.~D. Lax.
\newblock Hyperbolic systems of conservation laws. {II}.
\newblock {\em Comm. Pure Appl. Math.}, 10:537--566, 1957.

\bibitem[Lio82]{Lions'82}
Pierre-Louis Lions.
\newblock {\em Generalized solutions of {H}amilton-{J}acobi equations},
  volume~69 of {\em Research Notes in Mathematics}.
\newblock Pitman (Advanced Publishing Program), Boston, Mass., 1982.

\bibitem[Mat91]{Mather'91}
John~N. Mather.
\newblock Action minimizing invariant measures for positive definite
  {L}agrangian systems.
\newblock {\em Math. Z.}, 207(2):169--207, 1991.

\bibitem[Mat93]{Mather'93}
John~N. Mather.
\newblock Variational construction of connecting orbits.
\newblock {\em Ann. Inst. Fourier (Grenoble)}, 43(5):1349--1386, 1993.

\bibitem[Ole57]{Oleinik'57}
O.~A. Ole{\u\i}nik.
\newblock Discontinuous solutions of non-linear differential equations.
\newblock {\em Uspehi Mat. Nauk (N.S.)}, 12(3(75)):3--73, 1957.

\bibitem[WY12]{Yan'12}
Kaizhi Wang and Jun Yan.
\newblock A new kind of {L}ax-{O}leinik type operator with parameters for
  time-periodic positive definite {L}agrangian systems.
\newblock {\em Comm. Math. Phys.}, 309(3):663--691, 2012.

\end{thebibliography}
\end{document}